\begin{document}

\title[Rigidity of hypersurfaces of spherical space forms]{Homotopical and
topological rigidity of \\ hypersurfaces of spherical space forms}
\author{Pedro Z\"{u}hlke}
%\date{\today}
\subjclass[2010]{Primary: 58D10, 53C24. Secondary: 53C40, 53C42.}
\keywords{h-principle; hypersurface; immersion; normal curvature; principal
curvature; rigidity; sphere}
\maketitle

\begin{abstract}
The first main result is a topological rigidity theorem for complete immersed
hypersurfaces of spherical space forms from which similar theorems due
to Wang/Xia and Longa/Ripoll can be derived.  Under certain sharp conditions
on the principal curvatures of such a hypersurface $ f \colon N^n \to M^{n+1} $
($ n\ge 2 $), it asserts that the universal cover of $ N $ must be diffeomorphic
to the $ n $-sphere $ \Ss^n $, and provides an upper bound for the order of the
fundamental group of $ N $ in terms of that of $ M $.  In particular, if $ M =
\Ss^{n+1} $, then $ N $ is diffeomorphic to $ \Ss^n $ and either $ f $ or its
Gauss map is an embedding. 	

Let $ J \subs (0,\pi) $ be any interval of length less than $ \frac{\pi}{2} $.
The second main result constructs a weak homotopy equivalence between the space
of all complete immersed hypersurfaces of $ M $ with principal curvatures in $
\cot (J) $ and the twisted product of $ \big( \Ga\backslash \SO_{n+2} \big) $
and $ \Diff_+(\Ss^n) $ by $ \SO_{n+1} $, where $ \Ga $ is the fundamental group
of $ M $ regarded as a subgroup of $ \SO_{n+2} $.
	 
Relying on another rigidity criterion due to Wang/Xia, the third main result
constructs a homotopy equivalence between the space of all
complete immersed hypersurfaces of $ \Ss^{n+1} $ whose Gauss maps have image
contained in a strictly convex ball and the same twisted product, with $ \Ga $
the trivial group.  
\end{abstract}

%\setcounter{tocdepth}{1}
%\tableofcontents

%###########################################################################
%###########################################################################
%###########################################################################
%#######################   Introduction   #############################
%###########################################################################
%###########################################################################
%###########################################################################

\setcounter{section}{-1}
\section{Introduction}\label{S:introduction}

%\begin{rmk}\label{R:reflection}
	%Let $ \tau $ be any reflection of $ \Ss^{n+1} $ and $ f \colon
	%N^n \to \Ss^{n+1} $ be an immersion. Then the corresponding principal radii
	%$ \rho $ of $ f $ and $ \rho' $ of $ \tau \circ f $ are related by $ \rho' =
	%-\rho $.
%\end{rmk}

\begin{ucvn}\label{N:implicit}
	Throughout the article, $ n\geq 2 $ is an integer and \tit{manifolds are
	implicitly assumed to be connected, oriented and smooth}, i.e., of class $
	C^{\infty} $. Maps between manifolds are also assumed to be smooth, and sets
	of such maps are furnished with the $ C^{\infty} $-topology.
\end{ucvn}

\subsection*{Topological rigidity} A classical theorem of J.~Hadamard
\cite{Hadamard} states that a closed surface in the euclidean space $
\E^{3} $ whose Gaussian curvature does not vanish must be embedded as the
boundary of a convex body, and in particular diffeomorphic to the 2-sphere. The
following is the analogue of Hadamard's theorem for hypersurfaces of the sphere
$ \Ss^{n+1} $ (with the standard round metric).

\begin{thm}[do Carmo/Warner, {\cite[thm.~1.1]{CarWar}}]\label{T:doCarmo/Warner}
	Let $ N^n $ be closed and $ f\colon N^n \to
	\Ss^{n+1} $ be an immersion. Suppose that all sectional curvatures of $ f $ 
	are $ \geq 1 $. Then $ N $ is diffeomorphic to $ \Ss^n
	$, $ f $ is an embedding and $ f(N) $ is either totally geodesic (i.e., a
	great hypersphere) or contained in an open hemisphere. In the latter case, $
	f(N) $ is the boundary of a convex body.\footnote{The original statement
	also includes the assertion that $ f $ is geometrically rigid in the sense
	that if $ \bar{f} \colon N^n \to \Ss^{n+1} $ is another immersion inducing
	the same metric on $ N $ as $ f $, then there exists $ Q \in \Oo_{n+2} $
	such that $ \bar{f} = Q \circ f $. For a closely related theorem of wider
	scope, see \cite{Eschenburg}. } 
\end{thm}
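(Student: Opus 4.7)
The plan is to reduce the problem to a Morse-theoretic analysis after exploiting the Gauss equation. First, I would translate the curvature hypothesis into a pointwise condition on the shape operator. For a hypersurface of $\Ss^{n+1}$ the Gauss equation reads $K(e_i, e_j) = 1 + \lambda_i \lambda_j$ in a principal orthonormal frame $\{e_k\}$ with principal curvatures $\lambda_k$, so $K \ge 1$ is equivalent to $\lambda_i \lambda_j \ge 0$ for every pair. This is the statement that at each point $x \in N$ all principal curvatures share a common sign (with zeros allowed); equivalently, the shape operator $A_x$ is semidefinite. A connectedness argument on $N$, together with a suitable choice of unit normal $\nu$, allows me to assume $A \ge 0$ globally.

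Next I would split into two cases. If $A \equiv 0$ then $f$ is totally geodesic, and since complete totally geodesic hypersurfaces of $\Ss^{n+1}$ are precisely great hyperspheres, a standard developing map argument (using that $N$ is compact and $\Ss^n$ is simply connected for $n \ge 2$) shows $f$ is a diffeomorphism onto a great hypersphere. Otherwise, I would analyze the family of height functions $h_q \colon N \to \mathbb{R}$, $h_q(x) = \langle f(x), q\rangle$, for $q \in \Ss^{n+1}$. A direct computation yields
\[
\mathrm{Hess}(h_q) \;=\; -\langle q, f\rangle\, g \,+\, \langle q, \nu\rangle\, \mathrm{II},
\]
while $dh_q|_x = 0$ forces $q \in \mathrm{span}(f(x), \nu(x))$. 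Combined with $\mathrm{II} \ge 0$, this implies that every non-degenerate critical point of $h_q$ has Morse index either $0$ or $n$. By Sard's theorem $h_q$ is Morse for generic $q$, and Reeb's theorem then yields that $N$ is homeomorphic to $\Ss^n$, with $h_q$ having exactly one maximum and one minimum. Taking $q_0$ to be a minimizer of such an $h_q$ identifies an open hemisphere of $\Ss^{n+1}$ containing $f(N)$.

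Finally, to obtain the embedding and convex body assertions (and to upgrade the above homeomorphism to a diffeomorphism), I would compose $f$ with stereographic projection from the antipode of $q_0$ to land in $\E^n$. Local convexity transfers appropriately, and a Sacksteder/van Heijenoort-type theorem for complete, locally convex immersions of $n$-manifolds ($n \ge 2$) with somewhere-definite second fundamental form then yields the embedding of $N$ as the boundary of a convex body. The main obstacle I anticipate is the Morse-theoretic step: securing non-degeneracy and ruling out intermediate-index critical points for a suitable $q$, particularly when $\mathrm{II}$ has a nontrivial kernel at various points. A secondary subtlety lies in verifying that convexity in the spherical sense corresponds, under stereographic projection, to classical Euclidean convexity of the bounded region.
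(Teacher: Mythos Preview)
This theorem is not proved in the paper; it is quoted from do~Carmo and Warner and invoked as a black box (most prominently in the proof of Theorem~\ref{T:rigidity} and in Step~3 of Proposition~\ref{P:positive}). There is therefore no in-paper argument to compare your proposal against.

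Two remarks on the proposal itself. First, the Morse-theoretic assertion has a genuine gap, not merely the ``obstacle'' you anticipate: at a critical point $x$ of $h_q$ one has $q=\alpha f(x)+\beta\nu(x)$ and hence $\mathrm{Hess}(h_q)_x=-\alpha\,g+\beta\,\mathrm{II}$, with eigenvalues $-\alpha+\beta\lambda_i$ in a principal frame. Semidefiniteness of $\mathrm{II}$ alone does not force these to share a sign when $\alpha\beta>0$, so intermediate Morse indices are not excluded a~priori; your inference ``Combined with $\mathrm{II}\ge 0$, this implies \dots\ index either $0$ or $n$'' does not follow. Second, in the final step you want central (gnomonic) projection rather than stereographic: the former carries great circles to straight lines and thus transfers spherical local convexity to Euclidean local convexity directly, which is exactly what is needed to invoke Sacksteder; stereographic projection is conformal but does not preserve geodesics, so the convexity transfer would require a separate argument. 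Do~Carmo and Warner's actual route bypasses the Morse step entirely: from pointwise semidefiniteness of $\mathrm{II}$ they obtain local one-sided support at each tangent great hypersphere, deduce containment in a hemisphere, apply the Beltrami (central) map, and then invoke Sacksteder --- essentially your final paragraph, with the correct projection and without the height-function detour.
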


Because it will be necessary to consider several immersions $ f\colon
N^n \to M^{n+1} $ from a given manifold $ N $ to a given Riemannian manifold $ M
$ at once, our viewpoint will be that $ N $ is in each case furnished with the
corresponding induced metric. As above, this is reflected in the terminology in
that we speak of, e.g., the principal curvatures of $ f $ (not of $ N $). By a
\tdef{hypersurface} of $ M $ is meant such an immersion, not necessarily an
embedding.

\begin{dfn}[Gauss map, dual]\label{D:Gauss}
	The \tdef{Gauss map} $ \nu = \nu_f \colon N^n \to TM^{n+1} $ of an immersion
	$ f \colon N^n \to M^{n+1} $ is uniquely determined by the condition that
	for all $ p \in N $, $ (u_1,\dots,u_n) $ is a positively oriented
	orthonormal frame in $ TN_p $ if and only if 
	\begin{equation*}%\label{E:}
		\big(\?df_p(u_1)\?,\?\dots\?,\?df_p(u_n)\?,\?\nu(p)\? \big)
	\end{equation*}
	is a positively oriented orthonormal frame in $ TM_{f(p)} $.  
	%The Gauss map
	%of an immersion into $ \E^{n+1} $ (resp.~$ \Ss^{n+1} $) will be regarded as
	%taking values in $ \Ss^{n} $ (resp.~$ \Ss^{n+1} $) via the canonical
	%trivialization of $ T\E^{n+1} $ and the inclusion $ T\Ss^{n+1} \inc
	%\Ss^{n+1} \times \R^{n+2} $.  
	By convention, $ (u_1,\dots,u_{n+1}) $  is positively oriented in $
	T\Ss^{n+1}_p $ if and only if $ (u_1,\dots,u_{n+1},p) $ is positively
	oriented in $ \R^{n+2} $.  
	For $ M $ a spherical space form, the	\tdef{dual} of $ f $ (which need not
	be an immersion) is the map 
	\begin{equation*}%\label{E:}
		f^{\star} \colon N^n \to M^{n+1},\quad  p \mapsto \exp_{f(p)}\big(
		\tfrac{\pi}{2}\nu(p) \big).
	\end{equation*} 
	Thus when $ M = \Ss^{n+1} $, $ f^{\star} $ is simply the Gauss map $ \nu $
	of $ f $ regarded as a map into $ \Ss^{n+1} $, and both notations will be
	used. 
\end{dfn}

\begin{dfn}[principal radius, $ J(f) $]\label{D:principal}
	Recall that a hypersphere of metric radius $ r $ in $ \Ss^{n+1} $ has
	principal curvatures equal to $ \pm \cot r $.  A \tdef{principal radius} of
	a hypersurface $ f $ of a spherical space form is an element $ \rho $ of the
	circle $ \R \pmod \pi $ such that $ \cot \rho $ is a principal curvature of
	$ f $.    An \tdef{interval} of $ \R \pmod
	\pi $ is a connected subset thereof.  We denote by $ J(f) \subs \R \pmod \pi
	$ a smallest interval which contains all principal radii of $ f
	$.\footnote{If the set of all principal radii is contained in a
	half-open interval of length $ \frac{\pi}{2} $, then $ J(f) $ is
	uniquely determined; otherwise, it may not be. However, we are only
	interested in the former case.} 
\end{dfn}

For reasons which will be clarified later, the main results are formulated in
terms of principal radii. The following can be regarded as an extension of 
part of \tref{T:doCarmo/Warner}, as well as of similar theorems due to Wang/Xia
and Longa/Ripoll (see \tref{T:Wang/Xia} and \tref{T:Longa/Ripoll}).

\begin{thm}[topological rigidity in $ \Ss^{n+1} $]\label{T:rigidity}
	Let $ f \colon N^n \to \Ss^{n+1} $ be an immersion. Suppose that $ N $ is
	complete \tup(with respect to the metric induced by $ f $\tup) and $
	\length(J(f)) < \frac{\pi}{2} $. Then:
	\begin{enumerate}
		\item [(a)] $ N $ is diffeomorphic to $ \Ss^n $.
			%(with its standard smooth structure).
		\item [(b)] If $ J(f) $ does not contain $ 0 \pmod \pi $, then $ f $ is
			an embedding.
	\item [(c)] If $ J(f) $ does not contain $ \frac{\pi}{2} \pmod \pi $, then
		the dual of $ f $ is an embedding.
	\end{enumerate}
	In particular, (a)--(c) hold if $ N $ is closed and $ J(f) $ is disjoint
	from $ J(f) + \frac{\pi}{2} $.
\end{thm}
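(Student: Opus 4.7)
The central device is the family of parallel hypersurfaces
\[
 f_s(p) := \exp_{f(p)}\bigl(s\,\nu(p)\bigr),\qquad s\in\R,
\]
which links $ f = f_0 $ to $ f^\star = f_{\pi/2} $. Viewing $ \Ss^{n+1}\subs\R^{n+2} $ and differentiating $ f_s = \cos(s)\,f + \sin(s)\,\nu $ against a principal orthonormal frame $ (e_i) $ with $ f $-principal curvatures $ \cot\rho_i $ yields
\[
df_s(e_i) = \frac{\sin(\rho_i - s)}{\sin\rho_i}\,df(e_i).
\]
Thus $ f_s $ is an immersion exactly when $ s\pmod\pi $ is not a principal radius of $ f $ anywhere on $ N $, and in that case $ J(f_s) = J(f) - s\pmod\pi $, while the induced metric $ g_s $ has eigenvalues $ \sin^2(\rho_i - s)/\sin^2\rho_i $ relative to $ g $ in the principal frame.

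\textbf{Part (a).} Because $ \length J(f) < \tfrac{\pi}{2} $, one fixes $ t\in\R $ and $ \epsilon>0 $ with $ J(f_t) \subs [\epsilon,\tfrac{\pi}{2}-\epsilon] $. The displayed ratios are then uniformly bounded between two positive constants, so $ g_t $ is bi-Lipschitz to the complete $ g $ and hence itself complete. All $ f_t $-principal curvatures $ \cot(\rho_i - t) $ are positive, so the Gauss equation in $ \Ss^{n+1} $ gives sectional curvature $ \ge 1 $; Bonnet--Myers then forces $ N $ to be compact, and \tref{T:doCarmo/Warner} applied to $ f_t $ delivers $ N\cong\Ss^n $ together with the embeddedness and convexity of $ f_t $.

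\textbf{Parts (b) and (c).} Assume $ 0\pmod\pi\notin J(f) $, realised as $ J(f)\subs(\alpha,\beta)\subs(0,\pi) $ with $ \beta-\alpha<\tfrac{\pi}{2} $. Picking $ t\in(\beta-\tfrac{\pi}{2},\alpha) $ simultaneously implements (a) for $ f_t $ and makes every $ f_s $, $ s\in[0,t] $, an immersion, since $ J(f_s)\subs(0,\pi) $ throughout, so no focal point is encountered. One then transports embeddedness from the convex embedded $ f_t $ back to $ f_0 = f $ along the flow: the set
\[
 S = \{s\in[0,t] : f_s\text{ is an embedding}\}
\]
contains $ t $, is open by the tubular neighbourhood theorem, and -- this is the heart of the argument -- I aim to show it is closed by ruling out limit self-intersections using the absence of focal points together with the convex geometry of $ f_t $ supplied by \tref{T:doCarmo/Warner}. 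Connectedness of $ [0,t] $ then yields $ S = [0,t] $, giving~(b). For (c), observe that $ J(f^\star) = J(f) + \tfrac{\pi}{2}\pmod\pi $, so $ \tfrac{\pi}{2}\notin J(f) $ forces $ 0\notin J(f^\star) $; completeness of the $ f^\star $-induced metric comes from the same bi-Lipschitz argument as in (a), and (b) applied to $ f^\star $ delivers the claim.

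\textbf{Main obstacle.} The delicate step is the closedness of $ S $ in (b). A \emph{transverse} limiting double point persists under small perturbation of $ s $, contradicting embeddedness of $ f_{s'} $ for $ s'<s $ slightly; a tangential double point whose two branches share the same normal is excluded by uniqueness of normal geodesics, since the two sheets would then coincide identically and already violate embeddedness of $ f_t $. The case I expect to be hardest is a tangential double point with \emph{opposite} normals, which I plan to dispose of by combining the convexity of $ f_t $ with the fact that $ J(f_s) $ stays in $ (0,\pi) $ for all $ s\in[0,t] $.
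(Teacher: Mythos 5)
Your approach is essentially the same as the paper's: shift by a parallel translate $f_t$ to land in the do~Carmo/Warner range, then transport embeddedness back to $f$ along the flow $f_s$ by a connectedness argument. The plan for part (a) is fine (the ``bi-Lipschitz'' claim overstates what is available—if principal radii of $f$ approach $0\pmod\pi$ the ratio $\sin^2(\rho-t)/\sin^2\rho$ is unbounded above—but for completeness of $g_t$ you only need the lower bound $g_t\ge\sin^2\!\epsilon\,g$, which holds, so the conclusion stands). The openness of $S$, the persistence of transverse double points, and the elimination of same-normal tangencies (uniqueness of normal geodesics forces $f_t(p)=f_t(q)$, contradicting embeddedness of $f_t$) are all sound and agree with the paper.

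The genuine gap is exactly where you flag it: you do not actually rule out the opposite-normal tangential double point; you only announce a plan. This is the crux of the argument and the ingredients you name are not yet an argument. What the paper does at this point: if $\nu_{f_s}(p)=-\nu_{f_s}(q)$ at the limiting parameter $s$, let $C$ be the unique great circle tangent to both sheets. Since $f_t(N)$ bounds a strictly convex body $B$ contained in an open hemisphere, the arc of $C$ joining $f_t(p)$ to $f_t(q)$ through $B$ has length $<\pi$, and because the principal curvatures of $f_t$ are positive (not merely sectional curvature $\ge 1$), the Gauss normal points into $B$ at every point, so $\nu_{f_t}(p)$ and $\nu_{f_t}(q)$ both point into that arc. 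Translating from $f_t$ back to $f_s$ thus moves $f_t(p)$ and $f_t(q)$ outward along $C$ by a distance $<\tfrac{\pi}{2}$ each, which cannot bridge the complementary arc of length $>\pi$; hence $f_s(p)\ne f_s(q)$, a contradiction. You need to supply this step (or an equivalent) for the proof to be complete.
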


\begin{uexm}\label{E:comparison}
	To compare the hypotheses of \tref{T:doCarmo/Warner} and \tref{T:rigidity},
	let $ N^n $ be closed and $ f \colon N^n \to \Ss^{n+1} $ be an immersion.
	Then
	\begin{equation}\label{E:or} 
		J(f) \subs (0,\tfrac{\pi}{2}] \pmod \pi \quad \text{or} \quad  J(f)
		\subs [\tfrac{\pi}{2},\pi) \pmod \pi 
	\end{equation} 
	if and only if the principal curvatures are all nonnegative or
	nonpositive, respectively.  If either holds, then Gauss' equation
	implies that the sectional curvatures are $ \geq 1 $.
	
	Conversely, if the sectional curvatures are all $ \geq 1 $,
	then \eqref{E:or} must hold by \tref{T:doCarmo/Warner} together with
	Gauss' equation. This is clear if $ f $ is totally geodesic. Otherwise, 
	by convexity, for each $ p \in N$, $
	f(N) $ lies wholly on the side of the tangent hypersphere at $ f(p) $ to
	which $ \nu(p) $ or $ -\nu(p) $ points, and the sign is independent of $
	p $ by connectedness.  Hence no two principal curvatures have
	opposite signs.  
\end{uexm}

%In case $ J(f) $ does not contain $ 0 \pmod \pi $, the proof uses the theorem of
%Bonnet-Myers and normal translates of $ f$ (as defined in \lref{L:fr}) to obtain
%a reduction to \tref{T:doCarmo/Warner}; the assertion about embeddedness
%requires a bit more care. In case $ J(f) $ does not contain $ \frac{\pi}{2}
%\pmod \pi $, we obtain a reduction to the preceding case by showing that $ N $
%is complete with respect to the metric induced by $ f^{\star} $ and that $
%J(f^\star) $ does not contain $ 0 \pmod \pi $.

\begin{rmks}\label{R:principalcurvatures}In terms of principal curvatures, the
	conditions in \tref{T:rigidity} mean the following: 
	\begin{enumerate}
		\item [(i)]  $ J(f) $ has length less than $
			\frac{\pi}{2} $ and does not contain $ 0 \pmod \pi $
			if and only if all principal curvatures of $ f $ lie in an interval
			$ (a,b) $ with $ ab > -1 $ and $ b \in [0,+\infty] $ (where $ (\pm
			\infty)\?0 =0 $).  Thus part (b) applies only
			to hypersurfaces whose sectional curvatures are greater than some $
			c > 0 $, but for $ c $ as close to 0 as desired.
		\item [(ii)] $ J(f) $ has length less than $ \frac{\pi}{2} $
			and does not contain $ \frac{\pi}{2} \pmod \pi $ if and only if
			all principal curvatures of $ f $ lie in $ (-\infty,a) \cup
			(b,+\infty)  $ with $ ab < -1 $ and $ b \in (0,+\infty) $.  Thus
			part (c) applies only to hypersurfaces of two types: those whose
			sectional curvatures are everywhere strictly greater than 1, and
			those with the property that at every point, some sectional
			curvature is negative. The type depends on whether all principal
			curvatures have the same sign or not.
	\end{enumerate}
\end{rmks}

\begin{exm}\label{E:plane}
	A celebrated theorem of J.\,F.~Adams \cite[thm.~1.1]{Adams} states that it is
	possible to define a smooth $ k $-dimensional frame field over $ \Ss^n $ if and
	only if $ 0 \leq k \leq \rho(n+1) - 1 $, where $ \rho $ is the 
	Radon-Hurwitz function. By orthogonal duality, $ \Ss^n $ supports a smooth $
	k $-dimensional distribution if and only if it supports an $ (n-k)
	$-dimensional distribution. Assume without loss of generality that $ 2k \leq
	n $. In this case, the existence of a $ k $-dimensional distribution is
	equivalent to the existence of a $ k $-dimensional frame field over $ \Ss^n
	$ \cite[thm.~27.16]{Steenrod}. In summary, $ \Ss^n $ supports a $ p
	$-dimensional distribution $ (0 \leq p \leq n) $ if and only if either $ p <
	\rho(n+1) $ or $ n-p < \rho(n+1) $. 	

	Now suppose that the principal curvatures of a closed hypersurface $ f
	\colon N^n \to \Ss^{n+1} $ lie in $ (-\infty,-c^{-1}) \cup (c,+\infty) $ for
	some $ c \in (0,+\infty) $, e.g., they are greater than 1 in absolute value. Let
	$ 0 \leq p \leq n$ be the number of positive principal curvatures at some
	(hence every) point of $ N $. The corresponding principal directions define
	a $ p $-dimensional distribution over $ N $.	But by
	\tref{T:rigidity}, $ N $ is diffeomorphic to $ \Ss^n $.
	In particular, if $ n $ is even, we conclude that either $ p=0 $ or $ p=n $,
	so that both $ f(N) $ and $ f^\star(N) $ must be embedded as boundaries of
	convex bodies. 
	
	For contrast, 
	\begin{equation*}%\label{E:}
		f \colon \tfrac{1}{\sqrt{2}}\big(\Ss^{k} \times \Ss^{n-k}\big) \inc
		\Ss^{n+1} \qquad (k,\,n-k \geq 1)
	\end{equation*}
	has $ \pm 1 $ for its principal curvatures, so that $ J(f) =
	\big[\frac{\pi}{4},\frac{3\pi}{4}\big] $ or $
	\big[-\frac{\pi}{4},\frac{\pi}{4}\big] \pmod \pi $. Consideration of normal
	translates of the latter shows that given any interval $
	J $ of length greater than $ \frac{\pi}{2} $, there exists a closed
	hypersurface with principal radii in $ J $ which is not diffeomorphic to a
	sphere. Moreover, setting $ k = 1 $ and pre-composing with a self-map of $ \Ss^1 $
	of degree greater than 1, one can arrange that neither $ f $ nor $
	f^\star $ be injective. In this sense, \tref{T:rigidity} is sharp.
\end{exm}

%\subsection*{Hypersurfaces of spherical space forms}

%Recall that any spherical space form $ M^{n+1} $ may be expressed as the
%quotient of $ \Ss^{n+1} $ by a free, hence proper, action of a finite subgroup
%of $ \SO_{n+2} $ (or $ \Oo_{n+2} $ if $ M $ is not orientable).

\begin{thm}[topological rigidity in space forms]\label{T:spherical}
	Let $ M^{n+1} $ be a spherical space form and $ f \colon N^n \to M^{n+1} $
	an immersion. Suppose that $ \length(J(f)) < \frac{\pi}{2} $ and $ N $ is
	complete \tup(with the metric induced by $ f $\tup).  Then the universal cover
	of $ N $ is diffeomorphic to $ \Ss^n $.  Moreover:
	\begin{enumerate}
		\item [(a)]  If $ J(f) $ does not contain $ 0 \pmod \pi $ and
			$ m $ denotes the maximum number of preimages under $ f $ of a point
			in $ f(N) $, then
			$ m\abs{\pi_1(N)} \leq \abs{\pi_1(M)} $.
		\item [(b)] If $ J(f) $ does not contain $ \frac{\pi}{2} \pmod \pi $
			and $ m^\star $ denotes the maximum number of preimages under $
			f^{\star} $ of a point in $ f^{\star}(N) $, then $
			m^\star\abs{\pi_1(N)} \leq \abs{\pi_1(M)} $.
		\item [(c)]
			Let $ \pr_M \colon \Ss^{n+1} \to M $ be the covering projection.  If
			$ f $ is an embedding and $ k $ denotes the number of components of
			$ \pr_M^{-1}(f(N)) \subs \Ss^{n+1} $, then $ k\abs{\pi_1(N)} =
			\abs{\pi_1(M)} $.
	\end{enumerate}
\end{thm}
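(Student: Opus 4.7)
The plan is to reduce everything to \tref{T:rigidity} by lifting to universal covers. Write $ M^{n+1} = \Ga \backslash \Ss^{n+1} $ with $ \Ga = \pi_1(M) $ acting freely by isometries, and let $ \pi_N \colon \tilde{N} \to N $ denote the universal covering, equipped with the pullback metric, which is again complete. Since $ \tilde{N} $ is simply connected, I can lift $ f \circ \pi_N $ to an immersion $ \tilde{f} \colon \tilde{N} \to \Ss^{n+1} $ satisfying $ \pr_M \circ \tilde{f} = f \circ \pi_N $; as $ \pi_N $ and $ \pr_M $ are both local isometries, $ \tilde{f} $ has the same principal radii as $ f $ at corresponding points, so $ J(\tilde{f}) = J(f) $ has length less than $ \frac{\pi}{2} $.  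The diffeomorphism $ \tilde{N} \cong \Ss^n $ then follows by applying part~(a) of \tref{T:rigidity} to $ \tilde{f} $.

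For part~(a), the extra hypothesis will let me invoke part~(b) of \tref{T:rigidity} and upgrade $ \tilde{f} $ to an embedding. Given $ p \in f(N) $ with $ f^{-1}(p) = \{q_1,\dots,q_m\} $, the preimage $ \pi_N^{-1}\{q_1,\dots,q_m\} $ consists of exactly $ m \cdot \abs{\pi_1(N)} $ points in $ \tilde{N} $, which $ \tilde{f} $ maps injectively into $ \pr_M^{-1}(p) $, a set of cardinality $ \abs{\pi_1(M)} $.  Part~(b) will proceed analogously once I verify the compatibility $ \pr_M \circ \tilde{f}^{\star} = f^{\star} \circ \pi_N $, which follows from the facts that local isometries intertwine exponential maps and that the Gauss map of a lift coincides with the lift of the Gauss map; part~(c) of \tref{T:rigidity} will then turn $ \tilde{f}^{\star} $ into an embedding, and the same counting argument applies.

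For part~(c), the universal-cover conclusion $ \tilde{N} \cong \Ss^n $ already forces $ N $, hence $ f(N) $, to be compact, so $ \pr_M^{-1}(f(N)) \subs \Ss^{n+1} $ is compact as well.  Because $ f $ is an embedding and $ \pr_M $ is a local diffeomorphism, each connected component $ N_i $ of this preimage will be a compact embedded hypersurface of $ \Ss^{n+1} $ whose inclusion has principal radii contained in $ J(f) $; applying \tref{T:rigidity}(a) to each $ N_i $ yields $ N_i \cong \Ss^n $, in particular simply connected.  Hence composing $ \pr_M|_{N_i} $ with $ f^{-1} $ exhibits $ N_i \to N $ as a universal covering, of degree $ \abs{\pi_1(N)} $, and summing over the $ k $ components of $ \pr_M^{-1}(f(N)) $ gives $ k \cdot \abs{\pi_1(N)} = \abs{\pi_1(M)} $.

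The main obstacle, as I see it, is not computational but bookkeeping: one must check that principal radii, Gauss maps, duals and induced metrics all behave equivariantly under the two coverings $ \pi_N $ and $ \pr_M $, so that \tref{T:rigidity} genuinely applies to $ \tilde{f} $, to $ \tilde{f}^{\star} $, and to each $ N_i $.  Once these compatibilities are pinned down, the remainder of each part is a short counting argument.
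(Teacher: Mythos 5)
Your proposal is correct and follows essentially the same route as the paper: lift $f$ to $\tilde{f}\colon\tilde{N}\to\Ss^{n+1}$, invoke \tref{T:rigidity} for the diffeomorphism type and the injectivity of $\tilde{f}$ (resp.\ $\tilde{f}^{\star}$), carefully check that $\tilde{f}^{\star}$ is a lift of $f^{\star}$, and finish with counting arguments. The only cosmetic differences are that for (b) you apply \tref{T:rigidity}\,(c) directly to $\tilde{f}$ and count, where the paper instead shows $f^{\star}$ satisfies the hypotheses of part (a) and cites that; and for (c) you phrase the count via covering degree of $\pr_M\vert_{N_i}\colon N_i\to f(N)$, where the paper phrases it via the order of the stabilizer $G_C$ in the deck group --- these are the same calculation.
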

\begin{urmk}
	Even if $ M $ or $ N $ is not orientable, one can still define unsigned
	principal curvatures $ \ka \in [0,+\infty) $ of an immersion $ f \colon N^n
	\to M^{n+1} $ using local Gauss maps.  Suppose that all of them
	satisfy $ \ka < 1 - \eps $, or all satisfy $ \ka > 1 + \eps $ (for some $ \eps
	> 0 $). It follows from our proof that \tref{T:spherical} still applies in
	these two situations, because any lift $ \te{f}  \colon \te{N} \to
	\Ss^{n+1} $ of $ f $ to the universal cover $ \te{N} $ of $ N $ has the
	property that $ \length(J(\te{f})) < \frac{\pi}{2} $.
	
	With this in mind, \tref{T:spherical}\,(c) implies that one can obtain the
	conclusion of \cite[thm.~2]{LonRip1} while omitting their hypothesis on the
	distance to the cut locus and assuming only completeness, instead of
	closedness.  Indeed, their restriction on the principal curvatures
	immediately implies that they must be greater than $ 1+\eps $ in absolute
	value.\footnote{It should be noted that unlike here, in \cite{LonRip1} the
	term ``hypersurface'' stands for (the image of) an {embedding}, while an
	immersion is called an ``immersed hypersurface''. }
\end{urmk}

\begin{uexms}%\label{E:RPn}
	Let $ N^n $ be closed and possibly nonorientable, and $ f \colon N^n \to
	\RP^{n+1} $ be an immersion. If the unsigned principal curvatures are all
	less (resp.~greater) than 1, then $ N $ must be diffeomorphic to $ \Ss^n $
	or $ \RP^n $.  Moreover, if $ f $ (resp.~$ f^\star $) is not an embedding,
	then $ N = \Ss^n $. Compare \cite[cor.~1.1]{LonRip1}.

	When $ n $ is even, the only nontrivial group which acts
	freely on $ \Ss^n $ is $ \ifrac{\Z}{2\Z} $; thus, in the situation of
	\tref{T:spherical}, $ N $ is diffeomorphic to $ \Ss^n $ (or $ \RP^n $
	if nonorientability is allowed, as above).
\end{uexms}
 
\subsection*{Homotopical rigidity}
Let $ f \colon \Ss^n \to \Ss^{n+1} $ be an immersion and suppose that there
exists $ c \in \Ss^{n+1} $ such that $ f(\Ss^n) $ is contained in the open
hemisphere determined by $ c $. A canonical choice would be to take $ c $ as
the (circum)center of $ f(\Ss^n) $.\footnote{See
	\cite[ch.~II.2]{BriHae} for the definition and basic properties of the
	center of bounded subsets of CAT($ \ka $) spaces.} 
Choose any $ Q_f \in \SO_{n+2} $ satisfying $
Q_f(-e_{n+2}) = c $, and let $ \pi $  denote
central projection of the southern hemisphere onto the affine hyperplane $
\R^{n+1} \times \se{-1} $, which we identify with $ \E^{n+1} $.  Define $
\bar f = \pi \circ Q_f^{-1} \circ f $.  With the conventions described in
\dref{D:Gauss}, it is readily proved that the principal curvatures of $ f $ are
positive if and only if those of $ \bar f $ are negative.  Now it is a
consequence of \cite[prop.~4.3]{Zuehlke2} that the space of all immersions $
\Ss^n \to \E^{n+1} $ having negative principal curvatures is homotopy equivalent
to the group $ \Diff_+(\Ss^n) $ of orientation-preserving diffeomorphisms of $
\Ss^n $.  A homotopy equivalence simply assigns to each $ \bar f $ its Gauss
map.

Roughly, this means that a \tdef{locally convex} immersion $
f\colon \Ss^n \to \Ss^{n+1} $, i.e., one whose principal curvatures are
positive, is uniquely determined, up to homotopy, by the following data: an open
hemisphere containing its image and the Gauss map $ g_f $ of its composition
with central projection.  The antipode of the center of such a hemisphere is
recorded by the last column of $ Q_f $; the remaining columns correspond to a
choice of coordinate axes for the tangent space to $ \Ss^{n+1} $ at this point.
A change of axes affects $ g_f $ accordingly. 

This suggests that $ f \mapsto [Q_f,g_f] $ yields a homotopy equivalence between
the space of locally convex hypersurfaces of $ \Ss^{n+1} $ and the twisted
product $ \SO_{n+2} \times_{\SO_{n+1}} \Diff_+(\Ss^n) $. Recall that the latter
is the quotient of $ \SO_{n+2} \times \Diff_+(\Ss^n) $ under the equivalence
relation which identifies $ (Q,g) $ with $ (QP,P^{-1}g) $ for any $ Q \in
\SO_{n+2} $, $ P \in \SO_{n+1} $ and $ g \in \Diff_+(\Ss^n $).

In fact, a similar weak homotopy equivalence (\tdef{w.h.e.}) holds true for a
more general class of spaces.  However, for these it is simpler to construct the
w.h.e.~in the opposite direction.

\begin{dfn}[$ \sr F(M;I) $]\label{D:F}
	Let $ M^{n+1} $ be a Riemannian manifold and $ I $ be any interval of the
	real line. The set of immersions $ \Ss^n \to M^{n+1} $ whose principal
	curvatures take on values in $ I $, equipped with the $ C^{\infty} $-topology,
	will be denoted by $ \sr F(M;I) $. 

	Suppose $ M $ is a spherical space form and $ J \subs \R \pmod \pi $ is
	an interval of length less than $  \frac{\pi}{2} $.  Then
	it is reasonable to interpret $ \sr F(M;\cot J) $ as the space of \tit{all}
	complete hypersurfaces of $ M $ with principal curvatures in $ \cot J $.
	This is clear when $ M = \Ss^{n+1} $ due to \tref{T:rigidity}. In 
	the general case, one must identify two hypersurfaces which differ by a
	covering map; see \rref{R:meta}.
\end{dfn}

\begin{thm}[homotopical rigidity in $ \Ss^{n+1} $, I]\label{T:homotopical}
	Let $ J \subs (0,\pi) $ be an interval such that $ J \cap (J +
	\frac{\pi}{2}) = \emptyset $.  Let $ r \in J $ be arbitrary and $
	\iota_r \colon \Ss^n \to \Ss^{n+1},~p\mapsto \sin r\,p-\cos r\,e_{n+2} $.
	Then 
	\begin{equation}\label{E:Psir}
		\Psi \colon \SO_{n+2} \times_{\SO_{n+1}} \Diff_+(\Ss^n) \to \sr
		F(\Ss^{n+1};\cot J),\quad [Q,g] \mapsto Q \circ \iota_r \circ g
	\end{equation}
	is a weak homotopy equivalence.
\end{thm}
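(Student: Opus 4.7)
The plan is to realize $\Psi$ as a map of fiber bundles over $\Ss^{n+1}$ covering the identity, and reduce the equivalence on fibers to a known statement about spaces of hypersurfaces in $\E^{n+1}$ via central projection. First, $\Psi$ is well-defined and continuous: $\iota_r(\Ss^n)$ is the round hypersphere at spherical distance $r$ from $-e_{n+2}$, whose every principal curvature equals $\cot r \in \cot J$, so post-composition by $Q \in \SO_{n+2}$ and pre-composition by $g \in \Diff_+(\Ss^n)$ keep us in $\sr F(\Ss^{n+1}; \cot J)$. For $P \in \SO_{n+1} \subs \SO_{n+2}$ (the subgroup fixing $e_{n+2}$, hence also $-e_{n+2}$) one has $\iota_r \circ P = P \circ \iota_r$, so $\Psi$ descends to the twisted product. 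The source fibers over $\Ss^{n+1}$ via $[Q,g] \mapsto Q(-e_{n+2})$ with fiber $\Diff_+(\Ss^n)$, realizing $\SO_{n+2}/\SO_{n+1} \cong \Ss^{n+1}$.

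I would then construct a continuous $\SO_{n+2}$-equivariant ``center'' map $c \colon \sr F(\Ss^{n+1}; \cot J) \to \Ss^{n+1}$ making $\Psi$ a map of bundles. By \tref{T:rigidity} every $f \in \sr F$ is an embedded $n$-sphere. When $J \subs (0,\frac{\pi}{2})$ or $J \subs (\frac{\pi}{2}, \pi)$, the convexity argument of \ref{E:comparison} places $f(\Ss^n)$ in an open hemisphere, and I define $c_f$ to be the center of that hemisphere (normalized so that $c(\iota_r) = -e_{n+2}$, e.g.\ via the CAT($1$)-circumcenter of $f(\Ss^n)$ or of its dual). When $J$ straddles $\frac{\pi}{2}$, fix $s$ depending only on $J$ with $\max J - \frac{\pi}{2} < s < \min J$ (nonempty since $\length(J) < \frac{\pi}{2}$), replace $f$ by its normal translate $f_s(p) = \exp_{f(p)}(s\nu_f(p))$, whose principal radii lie in $J - s \subs (0,\frac{\pi}{2})$, and apply the preceding construction to $f_s$. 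By construction $c$ is $\SO_{n+2}$-equivariant and $\Psi$ covers it.

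It then suffices to show the induced map on fibers over $-e_{n+2}$ is a weak equivalence. Central projection $\pi$ from $-e_{n+2}$ onto $\E^{n+1}$ sends $\iota_r(p)$ to $\tan r \cdot p$, the round $n$-sphere of radius $\tan r$ in $\E^{n+1}$, and identifies $c^{-1}(-e_{n+2})$ with a space of complete immersions $\Ss^n \to \E^{n+1}$ whose principal curvatures have uniform sign (by the sign-reversal under central projection recalled in the introduction). Proposition~4.3 of \cite{Zuehlke2} then provides, via the Gauss map, a weak homotopy equivalence between this space and $\Diff_+(\Ss^n)$; combining this with the commuting fibration diagram and the five-lemma on homotopy long exact sequences yields that $\Psi$ is a weak homotopy equivalence.

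The main obstacle is the construction of $c$ in the straddling case: one must verify that the normal-translation reduction is continuous and $\SO_{n+2}$-equivariant, and that $\sr F(\Ss^{n+1}; \cot J) \to \Ss^{n+1}$ is a (quasi-)fibration with the fiber correctly identified under central projection. A secondary technical point is confirming that the weak equivalence from \cite[prop.~4.3]{Zuehlke2} is compatible with $\iota_r$, so that the induced map on fibers is precisely $g \mapsto \iota_r \circ g$ rather than merely some abstract equivalence.
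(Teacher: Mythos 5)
Your approach is genuinely different from the paper's: instead of passing through the inclusion $ \sr F(\Ss^{n+1};\cot J) \inc \sr F(\Ss^{n+1};(0,+\infty)) $ and showing it is a weak homotopy equivalence (\pref{P:whe}, proven in six steps via M\"obius transformations) before applying \pref{P:positive}, you try to fiber $ \sr F(\Ss^{n+1};\cot J) $ directly over $ \Ss^{n+1} $ by a circumcenter map and compare fibers. The fibering idea is sound as far as it goes --- the $ \SO_{n+2} $-equivariance of the circumcenter gives a bona fide associated bundle, not merely a quasifibration, so your flagged concern there is not the real problem.

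The genuine gap is the identification of the fiber. The fiber $ c^{-1}(-e_{n+2}) $ consists of immersions with principal radii in $ J $ \emph{and} circumcenter fixed at $ -e_{n+2} $; under central projection this becomes a proper subspace of $ \sr F(\E^{n+1};(-\infty,0)) $, not the whole thing. The curvature constraint $ \cot J $ does not translate to an interval constraint in $ \E^{n+1} $ (the projection rescales curvatures in a position-dependent way), and the circumcenter normalization cuts the space down further. Proposition~4.3 of \cite{Zuehlke2} gives a homotopy equivalence between the \emph{full} space $ \sr F(\E^{n+1};(-\infty,0)) $ and $ \Diff_+(\Ss^n) $; citing it as if it applied to your subspace is exactly where the argument fails. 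To close this gap you would have to show that the inclusion of your fiber into $ \sr F(\E^{n+1};(-\infty,0)) $ is a weak homotopy equivalence --- and that assertion carries essentially all the difficulty of the paper's \pref{P:whe}, which the paper handles by the M\"obius-transformation contraction of \lref{L:Moebius} and \cref{C:Moebius}. In short: your reduction to a fiberwise statement is a reasonable reorganization of the proof, but the "known result" you invoke at the last step is not known for the space you need it for; the hard step has been relocated, not eliminated.
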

\begin{rmk}\label{R:know}
	The group $ \Diff_+(\Ss^n) $ is generally disconnected. The number of
	components is finite for all $ n \neq 4 $, and for $ n \geq 5 $ it
	coincides with the order of the group $ \Theta_{n+1} $ of exotic spheres in
	dimension $ n + 1 $ (see \cite{Cerf} and \cite{Milnor4}).  It follows that
	if $ J $ contains a multiple of $ \pi $ in its interior, then $ \sr
	F(\Ss^{n+1};\cot J) $ cannot have the same homotopy type as the twisted
	product. For in this case the former has at least twice as many
	path-components as the latter. More precisely, the number of positive
	principal curvatures separates $ \sr F(\Ss^{n+1};\cot J) $ into closed-open
	subspaces, and the subspace of hypersurfaces with positive (resp.~negative)
	principal curvatures is weakly homotopy equivalent to $ \SO_{n+2}
	\times_{\SO_{n+1}} \Diff_+(\Ss^n) $ by the theorem.  We do not know what the
	homotopy type of the remaining components is. 
\end{rmk}

\begin{uexms}
	If $ n = 2 $ or $ 6 $, then the principal bundle
	\begin{equation*}%\label{E:}
		\SO_{n+2} \to \Ss^{n+1} \home \SO_{n+2}/\SO_{n+1} 
	\end{equation*} 
	admits a cross-section $ \sig $.\footnote{A section is readily constructed using
		quaternions and Cayley numbers, respectively; see
	\cite[thm.~8.6]{Steenrod}.}
	This provides an $ \SO_{n+1} $-equivariant homeomorphism $ \Ss^{n+1} \times
	\SO_{n+1} \to \SO_{n+2} $, given by $ (z,P) \mapsto \sig(z)P $. Thus
	\eqref{E:Psir} can be simplified to 
	\begin{equation*}
		\Psi \colon \Ss^{n+1} \times \Diff_+(\Ss^n) \to \sr F(\Ss^{n+1};\cot
		J),\quad (z,g) \mapsto \sig(z) \circ \iota_r \circ g \qquad (n=2,\,6).
	\end{equation*}
	Theorems due to S.~Smale \cite{Smale4} and A.~Hatcher \cite{Hatcher1}
	guarantee that the inclusion $ \SO_{n+1} \inc \Diff_+(\Ss^{n}) $ is a
	homotopy equivalence for $ n=2 $ and $ 3 $, respectively. It follows that in
	these two cases the w.h.e.~\eqref{E:Psir} may be simplified to
	\begin{equation*}
		\Psi \colon \SO_{n+2} \to \sr F(\Ss^{n+1};\cot J),\quad Q \mapsto Q
		\circ \iota_r \qquad (n=2,\,3).\footnote{This assertion is not
		obvious because the homotopy inverse need not be $
		\SO_{n+1} $-equivariant. It is rather a consequence of the fact that the
		space of locally convex immersions in $ \E^{n+1} $ is homotopy equivalent to $
		\Diff_+(\Ss^n) \iso \SO_{n+1}$ ($ n=2,3 $), to
	which the proof of \tref{T:homotopical} is ultimately reduced.}
	\end{equation*}
	It is easily checked that these simplifications
	are equivalent when $ n = 2 $. 
	%via the homotopy equivalences $ \SO_4 \iso \Ss^3 \times \SO_3 \iso \Ss^3
	%\times \Diff_+(\Ss^2) $.
\end{uexms}

Let $ I \subs \R $ be an arbitrary interval, $ M^{n+1} $ an arbitrary 
Riemannian manifold and $ \pr \colon \te{M} \to M $ a Riemannian covering
map. It can be shown \cite[lem.~5.7]{Zuehlke2} that the induced map $
\pr_\ast \colon \sr F(\te{M};I) \to \sr F(M;I),\ f \mapsto \pr \circ f $ is also
a covering map. Moreover, if $ \pr $ is regular, then so is $ \pr_\ast $, and
their automorphism groups are isomorphic via $ \ga \mapsto
\ga_\ast $. Together with \tref{T:homotopical}, this yields the following result; 
for a stronger version, see \rref{R:stronger}.

\begin{thm}[homotopical rigidity in space forms]\label{T:homotopicalspherical}
	Let $ \pr \colon \Ss^{n+1} \to M^{n+1} $ be a Riemannian covering and $ J
	\subs (0,\pi) $ an interval such that $ J \cap (J + \frac{\pi}{2}) =
	\emptyset $. Let $ r \in J $ be arbitrary and $ \iota_r \colon \Ss^n \to
	\Ss^{n+1},~p\mapsto \sin r\,p-\cos r\,e_{n+2} $. Then 
	\begin{equation}\label{E:psistar}
		\pr_\ast \circ \,\Psi \colon \SO_{n+2} \times_{\SO_{n+1}} \Diff_+(\Ss^n)
		\to \sr F(M;\cot J),\quad [Q,g] \mapsto \pr \circ\, Q \circ \iota_r
		\circ g
	\end{equation}
	induces isomorphisms between $ k $-th homotopy groups for $ k \neq 1 $, and
	yields an exact sequence
	\begin{equation*}%\label{E:}
		\pushQED{\qed}
		\begin{tikzcd}
		1 \rar &  \pi_1\big(\SO_{n+2} \times_{\SO_{n+1}} \Diff_+(\Ss^n)\big)
		\rar{(\pr_\ast \circ \Psi)_{\ast}} &  
		\pi_1\big(\sr F(M;\cot J)\big) \rar & \pi_1(M) \rar & 1. 
		\end{tikzcd}
		\qedhere 
		\popQED
	\end{equation*}
\end{thm}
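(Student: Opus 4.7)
The plan is to combine \tref{T:homotopical}, which supplies the weak equivalence $ \Psi $, with the cited fact \cite[lem.~5.7]{Zuehlke2} that the postcomposition map $ \pr_\ast \colon \sr F(\Ss^{n+1};\cot J) \to \sr F(M;\cot J) $ is a regular covering with automorphism group $ \Ga = \pi_1(M) $.  Because $ \Psi $ is already a weak homotopy equivalence, it suffices to establish the corresponding homotopical statement for $ \pr_\ast $ on its own.

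A covering map is a Serre fibration with discrete fibers, so the homotopy long exact sequence of $ \pr_\ast $ reads
\begin{equation*}
	\cdots \to \pi_k(\Ga) \to \pi_k\big(\sr F(\Ss^{n+1};\cot J)\big) \to \pi_k\big(\sr F(M;\cot J)\big) \to \pi_{k-1}(\Ga) \to \cdots.
\end{equation*}
Since $ \Ga $ is discrete, $ \pi_k(\Ga) = 0 $ for $ k \geq 1 $, immediately yielding the asserted isomorphism for all $ k \geq 2 $.  For $ k = 1 $ the sequence specializes to
\begin{equation*}
	1 \to \pi_1\big(\sr F(\Ss^{n+1};\cot J)\big) \to \pi_1\big(\sr F(M;\cot J)\big) \to \Ga \to \pi_0\big(\sr F(\Ss^{n+1};\cot J)\big),
\end{equation*}
so the desired short exact sequence will follow once it is verified that the monodromy map $ \pi_1(\sr F(M;\cot J)) \to \Ga $ is surjective; by exactness, this is equivalent to $ \Ga $ acting trivially on $ \pi_0(\sr F(\Ss^{n+1};\cot J)) $.

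I expect this last surjectivity to be the main point, and I would handle it by transporting the question through $ \Psi $.  The weak equivalence $ \Psi $ is $ \Ga $-equivariant for the respective postcomposition and left-translation actions, since $ \Psi([\ga Q,g]) = \ga \circ \Psi([Q,g]) $.  Let $ X $ denote $ \SO_{n+2} \times_{\SO_{n+1}} \Diff_+(\Ss^n) $.  The action of $ \ga \in \Ga \subs \SO_{n+2} $ on $ X $ is $ [Q,g] \mapsto [\ga Q,g] $, and since $ \SO_{n+2} $ is path-connected, any path from the identity to $ \ga $ supplies a homotopy between the identity self-map of $ X $ and this $ \ga $-action; the latter therefore fixes every path-component of $ X $.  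By equivariance, $ \Ga $ acts trivially on $ \pi_0(\sr F(\Ss^{n+1};\cot J)) $, which completes the proof.
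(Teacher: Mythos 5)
Your proof is correct and follows the same route the paper indicates: the theorem is presented as an immediate consequence of the fact that $\pr_\ast$ is a regular $\Ga$-covering (cited from \cite{Zuehlke2}) together with the weak equivalence $\Psi$ of \tref{T:homotopical}, exactly the ingredients you combine via the long exact sequence of a covering. The one step the paper does not make explicit for this theorem—that the monodromy $\pi_1(\sr F(M;\cot J)) \to \Ga$ is onto, equivalently that $\Ga$ fixes every path-component of $\sr F(\Ss^{n+1};\cot J)$—you supply correctly by observing that $\Psi$ is $\Ga$-equivariant and that left translation by $\ga$ on $\SO_{n+2} \times_{\SO_{n+1}} \Diff_+(\Ss^n)$ is homotopic to the identity since $\SO_{n+2}$ is path-connected; the paper records this same equivariance only later, in \rref{R:stronger}, where it is used for a different purpose (to descend $\Psi$ to a w.h.e.\ on the quotient via the five lemma). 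So you have filled a genuine, if small, gap that the paper's terse presentation leaves to the reader.
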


\begin{urmk}\label{R:deform}
	The theorem implies in particular that \eqref{E:psistar} induces a bijection
	between the respective sets of path-components. It follows that any
	complete hypersurface $ \Ss^n \to M^{n+1} $ whose principal curvatures are
	constrained to $ \cot (J)  $ can be deformed, through hypersurfaces of the
	same type, to a ``hypersphere'' of $ M $ (i.e., the image of some
	hypersphere of $ \Ss^{n+1} $ under the covering projection defining $ M $).
\end{urmk}

\subsection*{More homotopical rigidity} Another topological rigidity criterion
due to Wang/Xia (\cite[thm.~1.2]{WanXia}) asserts that a complete
hypersurface of $ \Ss^{n+1} $ whose dual has image contained in a 
metric ball of radius $ < \frac{\pi}{2} $ must be diffeomorphic to $ \Ss^n $. We
show that in this case it is an embedding, and apply their result to prove the
following analogue of \tref{T:homotopical} concerning the space $ \sr H $ of all
such hypersurfaces. In the statement $ \tau $ denotes the ``orthogonal
projection'' 
\begin{equation*}%\label{E:}
	\tau \colon \Ss^{n+1} \ssm \se{\pm e_{n+2}} \to \Ss^n, \quad p \mapsto
	\frac{p-\gen{p,e_{n+2}}e_{n+2}}{\abs{p-\gen{p,e_{n+2}}e_{n+2}}} .
\end{equation*}

\begin{thm}[homotopical rigidity II]\label{T:homotopical2}
	Let $ \iota \colon \Ss^n \inc \Ss^{n+1} $ denote set inclusion and $ \sr
	H $ be the space described above. Then:
	\begin{equation}\label{E:barPsi}
		\bar\Psi \colon \SO_{n+2} \times_{\SO_{n+1}} \Diff_+(\Ss^n) \to \sr H,
		\qquad [Q,g]   \mapsto Q \circ \iota \circ g
	\end{equation}
	is a homotopy equivalence. In fact, given $ f \in \sr H $, let $ c_f \in
	\Ss^{n+1} $ be the center of the image of its dual. Choose any $ Q_f
	\in \SO_{n+2} $ with $ Q_f(-e_{n+2}) = c_f $ and set $ g_f = \tau \circ
	Q_f^{-1} \circ f $. A homotopy inverse of \eqref{E:barPsi} is defined by:
	\begin{alignat*}{9}%\label{E:}
		 \bar\Phi \colon \sr H \to \SO_{n+2} \times_{\SO_{n+1}} \Diff_+(\Ss^n),
		 \qquad f   \mapsto [Q_f,g_f].
	\end{alignat*}
\end{thm}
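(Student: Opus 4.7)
The plan is to show that $\bar\Phi$ is well-defined and continuous, verify $\bar\Phi \circ \bar\Psi = \mathrm{id}$ essentially by inspection, and construct an explicit strong deformation from $\mathrm{id}_{\sr H}$ to $\bar\Psi \circ \bar\Phi$ via meridional contraction.

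First, well-definedness of $\bar\Phi$: Wang/Xia together with the paper's strengthening shows every $f \in \sr H$ is an embedding of a manifold diffeomorphic to $\Ss^n$, and the circumcenter $c_f$ of $f^\star(N)$ depends continuously on $f$ by the standard theory of centers of bounded sets in open balls of radius $< \frac{\pi}{2}$ in the CAT(1) space $\Ss^{n+1}$. Two admissible $Q_f$ differ by right multiplication by some $P \in \SO_{n+1}$ stabilizing $-e_{n+2}$; since $\tau$ is $\SO_{n+1}$-equivariant, $g_f$ transforms as $P^{-1}g_f$, so $[Q_f, g_f]$ is well-defined and, via local sections of $\SO_{n+2} \to \Ss^{n+1}$, continuous in $f$. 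The central geometric point is that $g_f = \tau \circ \tilde f \in \Diff_+(\Ss^n)$, where $\tilde f = Q_f^{-1} \circ f$: the hypothesis yields $\gen{\tilde\nu(p), -e_{n+2}} > \cos r$ for some $r < \frac{\pi}{2}$, and, combined with $\tilde f \perp \tilde\nu$, forces both $\tilde f(p) \neq \pm e_{n+2}$ and the avoidance of $\ker d\tau$ by the tangent space of $\tilde f$; hence $g_f$ is a local, and by compactness and simple-connectedness a global, diffeomorphism of $\Ss^n$, with the correct orientation by \dref{D:Gauss}. The identity $\bar\Phi \circ \bar\Psi = \mathrm{id}$ is then immediate: the dual of $Q \circ \iota \circ g$ is the constant $Q(-e_{n+2})$, its own circumcenter, so $Q_f = Q$ is admissible and $g_f = \tau \circ \iota \circ g = g$ via $\tau \circ \iota = \mathrm{id}_{\Ss^n}$.

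For the homotopy, decompose $\tilde f(p) = \cos\theta(p)\,v(p) + \sin\theta(p)\,e_{n+2}$ uniquely with $v = g_f$ valued in the equator $\Ss^n \subs \Ss^{n+1}$ and $\theta \in (-\frac{\pi}{2}, \frac{\pi}{2})$, and set
\begin{equation*}
	H_s(f)(p) = Q_f\big(\cos(s\theta(p))\,v(p) + \sin(s\theta(p))\,e_{n+2}\big), \qquad s \in [0,1],
\end{equation*}
so $H_1 = \mathrm{id}$ and $H_0 = \bar\Psi \circ \bar\Phi$; well-definedness and continuity of $H$ follow as for $\bar\Phi$. The main obstacle is to show $H_s(f) \in \sr H$ throughout. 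Writing $h_s$ for the bracketed expression, the immersion property follows from $dh_s(X) = \cos(s\theta)\,dv(X) + s\,d\theta(X)\,W$, with $W = \cos(s\theta)\,e_{n+2} - \sin(s\theta)\,v$ orthogonal to $dv(T_p\Ss^n)$, which gives injectivity from invertibility of $\cos(s\theta)\,dv$. Setting $\phi = \theta \circ v^{-1}$, a calculation in the orthogonal decomposition $T_{h_s(p)}\Ss^{n+1} = T_{v(p)}\Ss^n \oplus \R W$ yields
\begin{equation*}
	\gen{\nu_{h_s}(p), -e_{n+2}}^2 = \frac{\cos^2(s\theta)}{\cos^2(s\theta) + s^2\abs{\nabla \phi}^2}.
\end{equation*}
At $s=1$ this is pointwise $\geq \cos^2 r$, equivalently $\abs{\nabla\phi} \leq \cos\theta\,\tan r$; the analogous bound $s\abs{\nabla\phi} \leq \cos(s\theta)\,\tan r$ at arbitrary $s$ then reduces to the elementary inequality $\cos(s\theta)/s \geq \cos\theta$ on $(0,1] \times (-\frac{\pi}{2}, \frac{\pi}{2})$, itself a consequence of the monotonic decrease of $s \mapsto \cos(s\theta)/s$. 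At $s = 0$ the map is the equatorial embedding with constant Gauss map $-e_{n+2}$. Thus $H_s(f)$ remains in $\sr H$ throughout, and $\bar\Psi \circ \bar\Phi \simeq \mathrm{id}_{\sr H}$, completing the proof.
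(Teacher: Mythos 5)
Your proof is correct in outline and uses a genuinely different homotopy from the paper's, but one displayed formula contains a computational slip that should be fixed.

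The paper's proof works by conjugating the one-parameter family $\zeta_s$ (which in latitude coordinates sends $\theta$ to $\theta_s$ with $\tan\theta_s = (1-s)\tan\theta$) by $Q_f$, and establishes in \lref{L:zeta} and \lref{L:zeta2} that every $\zeta_s \circ h$ is an embedding with Gauss image in the open hemisphere about $-e_{n+2}$. You instead contract the latitude linearly, $\theta \mapsto s\theta$, and estimate the Gauss map directly. Both homotopies connect $f$ to $\bar\Psi\circ\bar\Phi(f)$ through $\sr H$; your choice makes the computation more elementary (no M\"obius-type lemma), at the cost of introducing $\phi = \theta\circ v^{-1}$ and its gradient, whereas the paper's single estimate \eqref{E:zetas} handles all three parts of \lref{L:zeta2} uniformly.

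The slip: in the orthogonal decomposition $T_{h_s(p)}\Ss^{n+1} = T_{v(p)}\Ss^n \oplus \R W$ with $W = \cos(s\theta)\,e_{n+2} - \sin(s\theta)\,v$, writing $\nu_{h_s} = aW + \xi$ yields $a^2 = \cos^2(s\theta)\big/\big(\cos^2(s\theta)+s^2\abs{\nabla\phi}^2\big)$, which is what your formula records; but $\gen{\nu_{h_s},-e_{n+2}} = a\gen{W,-e_{n+2}} = -a\cos(s\theta)$, so the correct identity is
\begin{equation*}
\gen{\nu_{h_s}(p),-e_{n+2}}^2 = \frac{\cos^4(s\theta)}{\cos^2(s\theta)+s^2\abs{\nabla\phi}^2}.
\end{equation*}
(Check: for $\theta\equiv\theta_0$ constant, $h_1$ is a parallel hypersphere and $\gen{\nu,-e_{n+2}}^2 = \cos^2\theta_0$, not $1$.) This strengthens rather than weakens your hypothesis at $s=1$: it forces $\cos^2\theta \geq \cos^2 r$ and $\cos^2 r\,\abs{\nabla\phi}^2 \leq \cos^2\theta\big(\cos^2\theta-\cos^2 r\big)$. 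The desired inequality for general $s$, namely $\cos^2(s\theta)\big(\cos^2(s\theta)-\cos^2 r\big) \geq \cos^2 r\, s^2\abs{\nabla\phi}^2$, then follows immediately from $\cos^2(s\theta) \geq \cos^2\theta$ and $s^2 \leq 1$, and you no longer even need the monotonicity of $s\mapsto \cos(s\theta)/s$. So the conclusion survives, and is in fact easier to reach, once the factor of $\cos^2(s\theta)$ is restored. One further small omission, easily repaired: you compute only the square $\gen{\nu_{h_s},-e_{n+2}}^2$, so a word is needed that $a$ never vanishes and hence $\gen{\nu_{h_s},-e_{n+2}}$ keeps the positive sign it has at $s=1$.
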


The analogues for $ n=1 $ of the spaces studied here are more
complicated. A complete description in the locally convex case is obtained in
\cite{Saldanha3}, and \cite{SalZueh} contains partial results in the general case.

\subsection*{Outline of the sections}
In \S\ref{S:topological} we compute the principal curvatures of the dual and of
the normal translates of an arbitrary immersion $ f \colon N^n \to \Ss^{n+1} $
in terms of those of $ f $. This is combined with \tref{T:doCarmo/Warner}
to establish the topological rigidity theorem \tref{T:rigidity} for
hypersurfaces of $ \Ss^{n+1} $. From this we then derive the
aforementioned related results of Wang/Xia and Longa/Ripoll. It is also shown
that if $ f $ is locally convex, then there exists an open hemisphere which
contains the images of both $ f $ and $ f^{\star} $.

The purpose of \S\ref{S:homotopical} is to prove the homotopical rigidity theorem
\tref{T:homotopical}. First it is proven that the inclusion $ \sr
F(\Ss^{n+1};\cot J) \inc \sr F(\Ss^{n+1};\cot J') $ is a weak homotopy
equivalence for any pair of intervals $ J \subs J' \subs (0,\pi) $ with $
J' $ disjoint from $ J' + \frac{\pi}{2} $. The proof relies on certain
M\"obius transformations and the results of \S 1. Using this,
\tref{T:homotopical} is reduced to the locally convex case, which is proven
directly by closely following the sketch provided above.
%and invoking a result of \cite{Zuehlke2} on hypersurfaces of $ \E^{n+1} $.

In \S\ref{S:hemispherical}, \tref{T:homotopical2} is established.  Even
though its statement is similar to that of \tref{T:homotopical}, its proof is
considerably easier.  

Finally, \S\ref{S:spherical} contains a proof of \tref{T:spherical}, 
a resulting interpretation of $ \sr F(M;I) $ as a space of what we call
``irreducible hypersufaces'', and a slightly stronger but more
technical formulation of \tref{T:homotopicalspherical}. At the end
of the paper three related open problems are listed.

% ##########################################################################
% ##########################################################################
% ########################   Topological rigidity   #######################
% ##########################################################################
% ##########################################################################
\section{Topological rigidity}\label{S:topological}

We begin by studying the properties of translates of a
hypersurface in the direction of its Gauss map.

\begin{lem}[parallel immersions]\label{L:fr}
	Let $ f \colon N^n \to \Ss^{n+1} $ be an immersion and $ \nu_f $ its Gauss
	map. Given $ r \in \R $, define $ f_r \colon N^n \to \Ss^{n+1} $ by
	\begin{equation}\label{E:fr}
		f_r(p) = \exp_{f(p)} (r\nu_f(p)) = \cos r\,f(p) + \sin r \, \nu_f(p)
		\qquad (p \in N).
	\end{equation}
	Then the following assertions hold: 
	\begin{enumerate}
		\item [(a)] $ f_r $ is an immersion if and only if $ r \pmod \pi $ is
			\emph{not} a principal radius of $ f $.
		\item [(b)] Suppose that $ f_r $ is an immersion. Let $ p \in N $ 
			and let $ l $ denote the number of principal radii $
			\rho \in (0,\pi) \pmod \pi $ of $ f $ at
			$ p $ such that
			\begin{equation}\label{E:l}
				\sin \rho\, \sin(\rho - r) < 0.
			\end{equation}
			Then $ l $ is independent of $ p $. Moreover, $ u \in TN $ is a
			principal direction for $ f $ associated to the principal radius $
			\rho \pmod \pi $ if and only if $ u $ is a principal
			direction for $ f_r $ associated to the principal radius $
			(-1)^{l}(\rho - r) \pmod \pi $.
		\item [(c)] If $ f_r $ is an immersion and $ r' \in \R $, then $
			f_{r+r'} = (f_{r})_{(-1)^{l}r'} $.
	\end{enumerate}
\end{lem}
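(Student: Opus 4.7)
The strategy is to read everything off by differentiating the ambient formula $ f_r = \cos r\, f + \sin r\, \nu_f $ in $ \R^{n+2} $ along a principal direction of $ f $, and then to extract the orientation data from the conventions of \dref{D:Gauss}.

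I will start with a principal direction $ u \in TN_p $ of $ f $ with principal radius $ \rho $, so that the Weingarten identity in $ \Ss^{n+1} $ (in the sign convention compatible with \dref{D:Gauss} and \dref{D:principal}) reads $ d\nu_f(u) = -\cot\rho\, df(u) $. Differentiating \eqref{E:fr} then gives
\[
df_r(u) \;=\; (\cos r - \cot\rho\,\sin r)\, df(u) \;=\; \frac{\sin(\rho - r)}{\sin \rho}\, df(u).
\]
Since principal directions diagonalize the shape operator at every point, this simultaneously diagonalizes $ df_r $ relative to $ df $, and part (a) is immediate: $ df_r $ fails to be injective at $ p $ precisely when some eigenvalue $ \sin(\rho - r)/\sin\rho $ vanishes, i.e., when $ r \equiv \rho \pmod \pi $ for some principal radius $ \rho $ of $ f $ at $ p $.

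For (b) the next task is to locate $ \nu_{f_r} $. Because each $ df_r(u_i) $ is a scalar multiple of $ df(u_i) $, the image of $ df_r $ at $ p $ is orthogonal, inside $ T\Ss^{n+1}_{f_r(p)} $, to the $ 2 $-plane spanned by $ f(p) $ and $ \nu_f(p) $; inside that plane the only unit vectors perpendicular to $ f_r(p) $ are $ \pm\bigl(-\sin r\, f(p) + \cos r\, \nu_f(p)\bigr) $, so I may write $ \nu_{f_r}(p) = \eps_p\bigl(-\sin r\, f(p) + \cos r\, \nu_f(p)\bigr) $ for some sign $ \eps_p \in \{\pm 1\} $.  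To pin down $ \eps_p $ I expand the determinant of $ (df_r(u_1),\dots,df_r(u_n),\nu_{f_r}(p),f_r(p)) $ in $ \R^{n+2} $: the diagonal scaling of the first $ n $ vectors contributes a sign of $ (-1)^l $ with $ l $ as in \eqref{E:l}, while the change of basis from $ (\nu_f(p),f(p)) $ to $ (\nu_{f_r}(p),f_r(p)) $ is a rotation of determinant $ \eps_p $ by a direct $ 2{\times}2 $ computation.  Requiring the full determinant to be positive, as dictated by the orientation convention of \dref{D:Gauss}, therefore forces $ \eps_p = (-1)^l $; and since $ \nu_{f_r} $ is continuous and $ N $ is connected, the integer $ l $ must itself be constant — which is the first assertion of (b).  With the explicit formula $ \nu_{f_r} = (-1)^l(-\sin r\, f + \cos r\, \nu_f) $ now available, a second differentiation gives
\[
d\nu_{f_r}(u_i) \;=\; (-1)^{l+1}\,\frac{\cos(\rho_i - r)}{\sin\rho_i}\, df(u_i),
\]
which is a scalar multiple of $ df_r(u_i) $; comparing the two scalars identifies the new principal curvature in direction $ u_i $ as $ (-1)^l \cot(\rho_i - r) = \cot\bigl((-1)^l(\rho_i - r)\bigr) $, i.e., the new principal radius as $ (-1)^l(\rho_i - r) \pmod \pi $.

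Part (c) is then a one-line trigonometric identity: substituting the formulas for $ f_r $ and $ \nu_{f_r} $ into $ (f_r)_s(p) = \cos s\, f_r(p) + \sin s\, \nu_{f_r}(p) $ with $ s = (-1)^l r' $ and applying the angle-addition formulas collapses the expression to $ \cos(r+r')\, f(p) + \sin(r+r')\, \nu_f(p) = f_{r+r'}(p) $.  The only step requiring real care is the orientation bookkeeping that produces the sign $ (-1)^l $ in the Gauss map of $ f_r $; everything else reduces to a direct computation in $ \R^{n+2} $.
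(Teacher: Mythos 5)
Your proof is correct and follows essentially the same route as the paper's: differentiate $f_r = \cos r\, f + \sin r\, \nu_f$ along a principal direction, pin down $\nu_{f_r}$ up to sign from the orientation convention of \dref{D:Gauss}, differentiate once more, and finish (c) by the angle-addition formulas. You are in fact more explicit than the paper about the orientation bookkeeping (the paper merely states that $\nu_{f_r}=(-1)^l(\cos r\,\nu_f-\sin r\,f)$ ``follows directly'' from the Gauss-map convention, whereas you carry out the block-diagonal determinant computation). The one small omission is that your argument establishes only the forward implication of the ``if and only if'' in (b); the paper closes this by applying the same computation to $f_r$ via the identity $f=(f_r)_{(-1)^{l+1}r}$ obtained from (c) — alternatively, one can note that the eigenspaces of the shape operator of $f_r$ coincide with those of $f$ because $(-1)^l(\rho_i-r)\equiv(-1)^l(\rho_j-r)\pmod\pi$ iff $\rho_i\equiv\rho_j\pmod\pi$.
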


\begin{urmk}
	Note that the expression in \eqref{E:l} is independent of the representative
	$ \rho $ of $ \rho \pmod \pi $, as expected, but its sign depends on the
	representative $ r $ of $ r \pmod \pi $.
\end{urmk}

\begin{proof}
	Let $ u $ be a principal direction for $ f $ associated
	to the principal radius $ \rho \in (0,\pi) \pmod \pi $. Then
	\begin{equation}\label{E:dfr}
		d(f_r)(u) = (\cos r - \cot \rho \sin r)df(u)=\frac{\sin (\rho - r)}{\sin
		\rho}df(u).
	\end{equation}
	This is a positive or negative multiple of $ df(u) $ according to whether 
	$ \sin \rho \sin (\rho - r) $ is positive or negative. In particular, it is
	a nonzero multiple if and only if $ r \not \equiv \rho \pmod \pi $, proving
	(a).
	
	Suppose now that $ f_r $ is an immersion. The number $ l $ defined in (b) is
	independent of the chosen point $ p \in N $ by connectedness of $ N $. 
	It follows directly from \eqref{E:dfr} and our definition \dref{D:Gauss} of
	the Gauss map that 
	\begin{equation}\label{E:nufr}
		\nu_{f_r} = (-1)^l(\cos r\,\nu_f -\sin r\, f ),
	\end{equation}
	so that
	\begin{equation*}\label{E:dnufr}
		d(\nu_{f_r})(u) = (-1)^{l+1} (\sin r + \cos r \cot \rho)df(u).
	\end{equation*}	
	Combining this with \eqref{E:dfr}, one deduces that
	\begin{equation*}%\label{E:}
		-d(\nu_{f_r})(u) = (-1)^l \cot(\rho - r)d(f_r)(u).
	\end{equation*}
	Thus, $ u $ is a principal direction for $ f_r $ associated to the
	principal radius $ (-1)^l (\rho - r) \pmod \pi $. The converse is obtained
	by applying the same argument to $ f_r $ using the identity $ f =
	(f_r)_{(-1)^{l+1} r} $, which is derived from (c). In turn, part (c) follows
	from a straightforward computation using \eqref{E:nufr}.
\end{proof}

\begin{crl}[curvature of the dual]\label{C:dual}
	Let $ M^{n+1} $ be a spherical space form and let
	$ f \colon N^n \to M^{n+1} $ be an immersion. Then its dual $ f^{\star}
	\colon N \to M $ is an immersion if and only if $ 0 $ is \emph{not} a
	principal curvature of $ f $.  Moreover, in this case:
	\begin{enumerate}
		\item [(a)] The dual of $ f^{\star} $ is $ (-1)^{l+1} f $, where $ l $
			is the number of positive principal curvatures of $ f $ at any point
			of $ N $, and by definition $ +f = f $ and  $ -f $ is given by $ p \mapsto
			\exp_{f(p)}\big(\pi \nu_f(p)\big) $.
		\item [(b)]	$ u $ is a principal direction for $ f $
			associated to the principal curvature $ \ka $ if and
			only if $ u $ is a principal direction for $ f^{\star} $ associated to
			the principal curvature $ (-1)^{l+1} \ka^{-1} $. 
		\item [(c)]	$ J(f^{\star}) \equiv \frac{\pi}{2} \pm J(f) \pmod \pi $.
	\end{enumerate} 
\end{crl}
\begin{proof}
	For $ M = \Ss^{n+1} $, apply \lref{L:fr} and \eqref{E:nufr} in the case
	where $ r = \frac{\pi}{2} $. The general case is reduced to this one by
	lifting $ f $ to $ \te{f} \colon \te{N} \to \Ss^{n+1}
	$, where $ \te{N} $ is the universal cover of $ N $, and noting that the
	dual of $ \te{f} $ is a lift of the dual of $ f $. (For a careful argument
		establishing the latter, see the third \hyperlink{paragraph}{paragraph}
	of the proof of \tref{T:spherical}).
\end{proof}

\begin{lem}\label{L:convex}
	Let $ N^n $ be closed and $ f \colon N^n \to \Ss^{n+1} $ be an 
	immersion. Define
	\begin{equation*}%\label{E:}
		F \colon N^n \times \big[0,\tfrac{\pi}{2}\big] \to \Ss^{n+1}\quad
		\text{by} \quad (p,t) \mapsto \exp_{f(p)}\big( t\nu(p) \big) =
		\cos t\, f(p) + \sin t\, f^\star(p).
	\end{equation*}
	If the principal curvatures of $ f $ are all positive (resp.~nonnegative),
	then the open (resp.~closed) hemisphere determined by the center of $ f(N) $
	contains the image of $ F $.
% particular those of $ f $ and $ f^\star $.
\end{lem}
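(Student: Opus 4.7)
The plan is to combine Theorem~\tref{T:doCarmo/Warner} with a supporting-hyperplane argument. By Gauss' equation, as observed in Example~\tref{E:comparison}, the sectional curvatures of $f$ are at least $1$, so do Carmo/Warner yields two possibilities: either $f(N)$ is a great hypersphere---possible only in the nonnegative case, where all principal curvatures vanish---or $f$ is an embedding whose image bounds a convex body $K \subset \Ss^{n+1}$ contained in an open hemisphere, with $K$ strictly convex in the positive case.

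The totally geodesic subcase is immediate: $\nu_f$ is constant equal to one of the two poles of the great hypersphere $f(N)$, which we take as the center $c$; then $\langle F(p,t), c\rangle = \sin t \ge 0$ for $t \in [0, \tfrac{\pi}{2}]$, so the image lies in the closed hemisphere. For the convex-body subcase, let $c$ denote the center of $f(N)$, which coincides with the spherical circumcenter of $K$ since any closed ball of radius $<\tfrac{\pi}{2}$ containing $\partial K$ must contain $K$. The goal then reduces to the two inequalities
\begin{equation*}
	\langle f(p), c\rangle > 0 \quad \text{and} \quad \langle \nu_f(p), c\rangle \ge 0 \qquad (p \in N),
\end{equation*}
from which
\begin{equation*}
	\langle F(p,t), c\rangle = \cos t\,\langle f(p),c\rangle + \sin t\,\langle \nu_f(p), c\rangle \ge 0
\end{equation*}
settles the nonnegative case. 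The first inequality is immediate. The second is the supporting-hyperplane property: the tangent great hypersphere at $f(p)$, i.e.~$\{x \in \Ss^{n+1} : \langle x, \nu_f(p)\rangle = 0\}$, bounds $K$ on one side, and under the orientation convention of Definition~\tref{D:Gauss}, nonnegative principal curvatures correspond to $\nu_f$ pointing into $K$ (a compatibility readily verified on a model hypersphere of radius in $(0, \tfrac{\pi}{2})$). Combined with the standard projection argument showing $c \in K$---adapted to spherical balls of radius $<\tfrac{\pi}{2}$---this yields $\langle \nu_f(p), c\rangle \ge 0$.

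The hard part is the strict case at $t = \tfrac{\pi}{2}$: we need $\langle \nu_f(p), c\rangle > 0$ strictly for every $p$, which amounts to $c \in \mathrm{int}(K)$. I would argue by contradiction: if $c = f(p_0) \in \partial K$, perturb along $c_s := \cos s\, c + \sin s\, \nu_f(p_0)$, which moves $c$ into $K$ along the inward normal. Strict convexity of $K$ implies $\langle \nu_f(p_0), x\rangle > 0$ for every $x \in K \setminus \{c\}$, and a direct calculation gives $\tfrac{d}{ds} d(c_s, x)\big|_{s=0} < 0$ for all such $x$. An envelope-type argument (Danskin's theorem) applied to the compact set of maximizers of $d(c, \cdot)$ on $K$ then shows $\max_{x \in K} d(c_s, x) < \max_{x \in K} d(c, x)$ for small $s > 0$, contradicting the circumcenter property of $c$. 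Hence $c \in \mathrm{int}(K)$, and the strict version of the supporting-hyperplane inequality completes the proof.
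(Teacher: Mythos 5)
Your proof is correct and follows essentially the same strategy as the paper's: invoke do Carmo/Warner to get a convex body $K$ with $f(N)=\partial K$, show that the circumcenter $c$ lies in the interior of $K$, and then use the supporting-hypersphere property to obtain $\langle F(p,t),c\rangle>0$. The paper differs in two inessential implementation points. First, for the claim $c\in\mathrm{int}(K)$ it simply says ``by convexity'' and then exploits the consequence that $c$ can be written as a positive combination $a_1f(p_1)+a_2f(p_2)$ with $p_1\neq p_2$, feeding that into the inequality $\langle f(p),f^\star(q)\rangle\geq 0$ (equality iff $p=q$); your more detailed perturbation/Danskin argument establishes the same fact from scratch, which is a reasonable choice since the assertion is not entirely obvious. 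Second, the paper dispatches the nonnegative case in one line by realizing $f$ as the limit of the normal translates $f_r$ as $r\to 0^+$ (so the closed-hemisphere containment follows from the open case by continuity), whereas you handle it directly by separating off the totally geodesic subcase and then running the non-strict inequalities; both work. One small remark: your direct handling of the nonnegative, non-geodesic subcase is actually slightly cleaner than relying on strict convexity there, since in that subcase $K$ need not be strictly convex and you correctly only use the non-strict supporting-hypersphere inequality together with $c\in K$.
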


\begin{rmk}\label{R:convex}
	The center of the osculating circle of a normal section to $ f $ at $ p $ is $
	\exp_p(\rho \nu(p)) $, where $ \rho $ is the radius of curvature of the
	normal section. Thus, a convenient geometric interpretation for $ F $ is the
	following: 	The image of $ N \times \big( 0,\frac{\pi}{2} \big) $ under $ F
	$ is the locus of all possible centers of osculating circles to normal
	sections of $ f $, given the information that the principal curvatures of $
	f $ are positive but otherwise unrestricted. 
\end{rmk}

\begin{proof}
	Assume first that the principal curvatures are positive. Then $ f(N) $ is the
	boundary of a strictly convex body $ B $; cf.~ \tref{T:doCarmo/Warner} and
	\cite[lem.~2.2]{CarWar}. Therefore, $ f(N) \ssm \se{f(q)} $ is contained in
	the open hemisphere determined by $ \nu(q) $ for each $ q \in N $.
	Equivalently, 
	\begin{equation}\label{E:fnuf}
		\gen{f(p),f^\star(q)} \geq 0\text{\quad for all $ p,\,q \in N $},
	\end{equation}
	with equality holding if and only if $ p = q $. Let $ c \in \Ss^{n+1} $
	denote the center of $ f(N) $. By convexity, $ c $ lies in the interior of $
	B $, so it may be written as  $ c = a_1f(p_1)+a_2f(p_2) $ for
	some $ a_1,\,a_2 > 0 $ and $ p_1 \neq p_2 \in N $.  It follows from
	\eqref{E:fnuf} that $ \gen{c,f^{\star}(q)} > 0 $ for all $ q \in N $.
	Consequently, the open hemisphere determined by $ c $ contains both $
	f^{\star}(N) $ and $ f(N) $, the latter by the definition of $ c $. But
	being convex, this hemisphere must also contain the image of $ F $.  
	
	In case the principal curvatures are only nonnegative, the assertion can be
	deduced from the preceding paragraph by regarding $ f $ as the limit of $
	f_r $ as $ r \to 0^+ $. 
\end{proof}

\begin{proof}[Proof of \pref{T:rigidity}]
	Suppose first that $ J(f) $ does not contain $ 0 \pmod \pi $. Then by
	\rref{R:principalcurvatures}\,(i) combined with the theorem of Bonnet-Myers,
	$ N $ must be closed.  Together with the hypothesis that $ \length(J(f)) <
	\frac{\pi}{2} $, this in turn implies that $ J(f) $ is contained in 
	$ (r,r + \frac{\pi}{2}) \pmod \pi $ for some $ r \in (0,\frac{\pi}{2}) $.
	By \lref{L:fr}\,(b),
	\begin{equation*}%\label{E:}
		J(f_r) \equiv J(f) - r \subs (0,\tfrac{\pi}{2}).
	\end{equation*}
	In particular, \tref{T:doCarmo/Warner} applied to $ f_r $ guarantees that $
	N $ is diffeomorphic to $ \Ss^n $, thus establishing (a) in this case.
	Moreover, by \lref{L:fr}\,(c), $ f = (f_r)_{-r} $. Thus, to prove (b) it
	will be sufficient to show that: if $ g \colon N^n \to \Ss^{n+1} $ is
	an immersion with $ J(g) \subs (0,\frac{\pi}{2}) \pmod \pi $, then $ g_{-r}
	$ is an embedding for all $ r \in [0,\frac{\pi}{2}] $. 

	Let $ B $ be the convex body bounded by $ g(N) $. Because the
	principal curvatures of $ g $ are positive (instead of negative), 
	\begin{equation}\label{E:center}
		\text{$ \nu_g $ points towards the interior of $ B $ at every
		point of $ N $.}
	\end{equation}
	Define
	\begin{equation*}%\label{E:}
		s = \sup \set{t \in [0,\tfrac{\pi}{2}]}{g_{-r} \text{ is an embedding
		for all $ r \in [0,t] $}}.
	\end{equation*}
	Suppose for the sake of obtaining a contradiction that $ s < \frac{\pi}{2}
	$. Recall that embeddings form an open subset of $ C^{\infty}(N,\Ss^{n+1}) $.
	Moreover, $ g_{-r} $ is an immersion for all $ r \in [0,\frac{\pi}{2}]
	$ by \lref{L:fr}\,(a). Hence $ g_{-s} $ cannot be injective.  Let $ p\neq q
	\in N $ be such that $ g_{-s}(p) = g_{-s}(q) $.  As we are in codimension 1,
	$ d(g_{-s})(TN_p) $ and $ d(g_{-s})(TN_q) $ are not transverse, for
	otherwise there would exist $ s'<s $ such that $ g_{-s'} $ is likewise not
	injective. Thus $ \nu_{g_{-s}}(p) = \pm \nu_{g_{-s}}(q) $ in addition to $
	g_{-s}(p) = g_{-s}(q) $. 
	If $ \nu_{g_{-s}}(p) = \nu_{g_{-s}}(q) $, then it
	follows immediately from the definition that
	\begin{equation*}%\label{E:}
		g(p) = (g_{-s})_{s}(p) = (g_{-s})_{s}(q) = g(q);
	\end{equation*}
	but this is impossible because $ g $ is an embedding, by \tref{T:doCarmo/Warner}.
	
	Hence $ \nu_{g_{-s}}(p) = -\nu_{g_{-s}}(q) $. Let $ C $ be the unique
	great circle tangent to these.  Then $ g(p) $ and $ g(q) $ bound a segment $
	S \subs B $ of $ C $ of length less than $ \pi $, since $ g(N) $ is
	contained in an open hemisphere. But by \eqref{E:center}, $
	\nu_{g}(p) $ and $ \nu_{g}(q) $ both point to the interior of $ S $.  Hence
	$ g_{-s}(p) $ and $ g_{-s}(q) $, which are obtained from $ g(p) $ and $ g(q)
	$ by moving a distance $ s < \frac{\pi}{2} $ towards the exterior of $ S $
	along $ C $, cannot be equal. This is a contradiction.  Thus, if $ J(f) $
	does not contain $ 0 \pmod \pi $, then $ N $ is diffeomorphic to $ \Ss^n $
	and $ f $ is an embedding.

	\label{P:paragraph}Suppose now that $ J(f) $ does not contain $
	\frac{\pi}{2} \pmod \pi $.  Then, by \cref{C:dual}\,(c), $ J(f^{\star})
	\equiv \tfrac{\pi}{2} \pm J(f) \pmod \pi $ does not contain $ 0 \pmod \pi $.
	Let $ g $ and $ g^\star $ denote the Riemannian metrics on $ N $ induced by
	$ f $ and $ f^\star $, respectively. By \rref{R:principalcurvatures}\,(ii)
	there exists $ k > 0 $ such that the principal curvatures of $ f $ are
	greater than $ k $ in absolute value.  Using \eqref{E:dfr} with $ r =
	\frac{\pi}{2} $, it follows that
	\begin{equation}\label{E:gstar}
		g^\star (u,u) \geq k^2 \,g(u,u)
	\end{equation}
	for any principal direction $ u $. But by \cref{C:dual}\,(b), the principal
	directions of $ f $ and $ f^\star $ are the same. Since at every point it is
	possible to find an orthogonal basis (with respect to either metric)
	consisting of such directions, \eqref{E:gstar} holds for any tangent vector
	$ u $.  It follows that $ (N,g^\star) $ is complete since $ (N,g) $ is.
	Therefore $ f^\star $ satisfies the hypotheses of part (b). By what was
	proven above, it must be an embedding and $ N $ must be diffeomorphic to $
	\Ss^n $. 
\end{proof}

\begin{rmk}\label{R:lastparagraph}
	The following fact established in the preceding paragraph 
	will be invoked later: Under the hypotheses of \tref{T:rigidity}\,(c), $ N $
	is complete with respect to the metric induced by $ f^\star $.
\end{rmk}

\begin{crl}
	Let $ f \colon N^n \to \Ss^{n+1} $ be a closed hypersurface whose sectional
	curvatures are all greater than 1. Then $ f $ is homotopic, through
	embeddings, to either $ f^\ast $ or $ -f^\ast $.
\end{crl}
\begin{proof}
	The homotopy is provided by $ t \mapsto f_t $ for $ t \in
	\big[0,\frac{\pi}{2}\big] $ in case the principal curvatures are all
	negative, and by $ t \mapsto f_{-t} $ otherwise.
\end{proof}

\begin{thm}[Wang/Xia, {\cite[thm.~1.1]{WanXia}}]\label{T:Wang/Xia}
	Let $ N^n $ be a closed manifold and $ f \colon N^n \to \Ss^{n+1} $ be an
	immersion.  Suppose that $ f(N) $ is contained in an open hemisphere and
	that the Gauss-Kronecker curvature of $ f $ does not vanish.
	Then $ N $ is diffeomorphic to $ \Ss^n $.
\end{thm}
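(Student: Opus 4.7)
The plan is to deduce this from \tref{T:rigidity}\,(a) by verifying its hypotheses, i.e., that $\length(J(f)) < \tfrac{\pi}{2}$ and that $J(f)$ avoids $0 \pmod \pi$.  The second condition is immediate: non-vanishing of the Gauss-Kronecker curvature (the product of the principal curvatures) means that no principal curvature of $f$ is ever zero, hence no principal radius equals $0 \pmod \pi$.  All the real work lies in establishing the first condition, and the open-hemisphere hypothesis is used to enforce it by forcing the principal curvatures to have constant sign.

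Since $f(N)$ is compact and contained in an open hemisphere, one may select a closed geodesic ball $B \subs \Ss^{n+1}$ of smallest radius $r_0$ that contains $f(N)$; this smallest enclosing ball exists (as the circumscribed ball associated to the ``center'' referred to in the introduction) and $r_0 < \tfrac{\pi}{2}$.  By the minimality of $r_0$ there exists some $p_0 \in N$ with $f(p_0) \in \partial B$, and near such a contact point $f(N)$ lies locally inside $B$.  Orient $N$ so that $\nu_f(p_0)$ points toward the center $c$ of $B$.  A standard second-order maximum-principle argument applied to the function $p \mapsto d(c,f(p))$, which attains its maximum at $p_0$, then shows that each principal curvature of $f$ at $p_0$ is at least equal to the corresponding principal curvature of $\partial B$ at $f(p_0)$, namely $\cot r_0 > 0$.

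Because no principal curvature of $f$ ever vanishes, the number of positive principal curvatures is a locally constant function on $N$.  Since $N$ is connected and all principal curvatures of $f$ are positive at $p_0$, all principal curvatures are positive everywhere on $N$.  Thus $J(f) \subs (0,\tfrac{\pi}{2}) \pmod \pi$, and compactness of $N$ makes $J(f)$ into a compact subinterval of $(0,\tfrac{\pi}{2})$ of length strictly less than $\tfrac{\pi}{2}$.  An appeal to \tref{T:rigidity}\,(a) then yields that $N$ is diffeomorphic to $\Ss^n$.

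The one delicate point I expect is the shape-operator comparison at $p_0$: one must set up the orientation conventions carefully so that the geometric statement ``$f(N)$ lies inside $B$ near $f(p_0)$'' translates into the correct inequality (rather than its reverse) between the principal curvatures of $f$ and those of $\partial B$ with the chosen normal.  Once this is in place the rest of the argument is essentially bookkeeping.
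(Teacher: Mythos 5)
Your proof is correct and follows essentially the same route as the paper: compare $f(N)$ with its smallest enclosing (circum)ball to find a point where all principal curvatures have the same sign, use nonvanishing Gauss--Kronecker curvature plus connectedness to propagate this globally, and then invoke Theorem~\ref{T:rigidity}. The paper states the circumsphere comparison in one line and keeps both sign cases $J(f)\subs(0,\tfrac{\pi}{2})$ or $(\tfrac{\pi}{2},\pi)\pmod\pi$ without fixing an orientation, whereas you normalize the orientation at the contact point and spell out the second-order comparison; these are only presentational differences.
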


\begin{proof}
	Comparison of $ f(N) $ with its metric circumsphere
	(cf.~\cite[II.2.7]{BriHae}), shows that $ N $ contains a point where the
	principal curvatures  have the same sign. Since by hypothesis the
	Gauss-Kronecker curvature is nonvanishing and $ N $ is connected, all
	principal curvatures must have the same sign. Thus $ J(f) \subs \big(
	0,\frac{\pi}{2} \big) $ or $ \big( \frac{\pi}{2},\pi \big) \pmod \pi $, and
	\tref{T:rigidity}\,(a,\,b,\,c) apply. 
\end{proof}

\begin{thm}[Longa/Ripoll, {\cite[thm.~1]{LonRip1}}]\label{T:Longa/Ripoll}
	Let $ N^n $ be closed and $ f \colon N^n \to \Ss^{n+1} $ be an 
	immersion. Let $ r $ be the radius of the smallest closed metric ball
	containing $ f(N) $. If each principal curvature $ \ka $ of $ f $ satisfies
	$ \abs{\ka} > \tan \big(\frac{r}{2}\big) $,	then $ N $ is diffeomorphic to $
	\Ss^n $.
\end{thm}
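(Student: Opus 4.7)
The plan is to reduce the statement to \tref{T:rigidity} by forcing $ J(f) $ into an arc of length less than $ \frac{\pi}{2} $ which avoids $ 0 \pmod \pi $, in the same spirit as the derivation of \tref{T:Wang/Xia} above.

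Since $ N $ is compact, there exists $ p_0 \in N $ with $ f(p_0) \in \partial B_r(c) $, where $ c \in \Ss^{n+1} $ is the center of the minimal enclosing closed metric ball of $ f(N) $. At $ p_0 $ the images $ f(N) $ and $ \partial B_r(c) $ share the same tangent space in $ T\Ss^{n+1}_{f(p_0)} $, and $ f(N) $ lies locally on the side of $ \partial B_r(c) $ containing $ c $.  A standard second-order comparison with the totally umbilical $ \partial B_r(c) $ (whose principal curvatures are $ \mp \cot r $ depending on the choice of unit normal) then shows that, with a suitable sign convention, all principal curvatures $ \ka_i(p_0) $ of $ f $ satisfy $ \ka_i(p_0) \leq -\cot r $.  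This comparison step is where I expect the main bookkeeping to live, since several sign conventions -- that of $ \nu_f $, of the outward normal of $ \partial B_r(c) $, and of the second fundamental form on $ \Ss^{n+1} $ -- need to be reconciled.  Combined with $ \abs{\ka_i(p_0)} > \tan(\tfrac{r}{2}) $ and the elementary identity $ \tan(\tfrac{r}{2}) + \cot r = \csc r > 0 $, the inequality $ \ka_i(p_0) \leq -\cot r $ rules out the positive branch and forces $ \ka_i(p_0) < -\tan(\tfrac{r}{2}) < 0 $.

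Since the principal curvatures are continuous on $ N $ and by hypothesis nowhere vanishing, they remain $ < -\tan(\tfrac{r}{2}) $ on all of $ N $.  Hence every principal radius satisfies $ \cot \rho < -\tan(\tfrac{r}{2}) = \cot \big( \tfrac{\pi}{2} + \tfrac{r}{2} \big) $, which gives $ \rho \in \big( \tfrac{\pi}{2} + \tfrac{r}{2},\,\pi \big) \pmod \pi $.  Therefore $ J(f) $ sits in an open arc of length $ \tfrac{\pi}{2} - \tfrac{r}{2} < \tfrac{\pi}{2} $ that misses $ 0 \pmod \pi $, and \tref{T:rigidity}\,(a,\,b) applies to yield not only that $ N $ is diffeomorphic to $ \Ss^n $ but also the (a priori stronger) conclusion that $ f $ is an embedding.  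The opposite sign convention for $ \nu_f $ is handled symmetrically, placing $ J(f) $ inside $ \big( 0,\tfrac{\pi}{2} - \tfrac{r}{2} \big) \pmod \pi $ with the same consequences.
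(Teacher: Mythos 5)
Your proof is correct but takes a genuinely different (and more unified) route than the paper's. The paper's argument is a two-case split: if $r < \frac{\pi}{2}$, then $f(N)$ lies in an open hemisphere and \tref{T:Wang/Xia} is invoked (which is itself proved via a circumsphere comparison); if $r \geq \frac{\pi}{2}$, then $\tan(\frac{r}{2}) \geq 1$ forces $\abs{\ka} > 1$, so $J(f) \subseteq (-\frac{\pi}{4},\frac{\pi}{4}) \pmod \pi$ and \tref{T:rigidity}\,(a,\,c) apply. You instead perform the circumsphere comparison directly at the extremal point $p_0$ and, rather than discarding the precise value of $r$, combine the resulting one-sided bound $\ka_i(p_0) \leq -\cot r$ (or $\geq \cot r$, depending on the orientation of $\nu_f$) with the hypothesis $\abs{\ka_i} > \tan(\frac{r}{2})$ and the identity $\tan(\frac{r}{2}) + \cot r = \csc r > 0$. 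This pins down the sign of every principal curvature at $p_0$ for all $r$ at once, and the everywhere-nonvanishing hypothesis propagates that sign over all of $N$. The payoff is threefold: no case split is needed; the conclusion holds uniformly for $r \in (0,\pi)$; and the resulting arc $J(f) \subseteq (\frac{\pi}{2}+\frac{r}{2},\pi) \pmod \pi$ (or its reflection) has length $\frac{\pi}{2}-\frac{r}{2} < \frac{\pi}{2}$ and avoids both $0$ and $\frac{\pi}{2}\pmod\pi$, so that in fact \tref{T:rigidity}\,(a,\,b,\,c) all apply and you get for free that both $f$ and $f^{\star}$ are embeddings (the paper's case $r \geq \frac{\pi}{2}$ only gives (a,\,c)). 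One step you left implicit and should record: the Hessian comparison at $p_0$ and the positivity of $\csc r$ both require $r < \pi$, which holds because $f(N)$ is a measure-zero compact subset of $\Ss^{n+1}$ and hence misses an open ball around some point.
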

\begin{proof}
	If $ r < \frac{\pi}{2} $, then $ f(N) $ is contained in an open hemisphere
	and no principal curvature is zero, hence \tref{T:Wang/Xia} applies.  If $ r
	\geq \frac{\pi}{2} $, then all principal curvatures are greater than 1 in
	absolute value.  Equivalently, $ J(f) $ is contained in $ (
	-\frac{\pi}{4},\frac{\pi}{4} ) \pmod \pi $, so \tref{T:rigidity}\,(a,\,c)
	apply.
\end{proof}

% ##########################################################################
% ##########################################################################
% ########################   Homotopical Rigidity   ########################
% ##########################################################################
% ##########################################################################
\section{Homotopical rigidity}\label{S:homotopical}

The following elementary result will be used together with \lref{L:convex} to
uniformly increase the principal curvatures of a locally convex immersion.
\begin{lem}\label{L:Moebius}
	Let  $ C \subs \Ss^{n+1} $ be a circle of radius $ < \frac{\pi}{2} $. Let
	$ \sig_c \colon \Ss^{n+1} \to \hat\R^{n+1} = \R^{n+1} \cup \se{\infty} $
	denote stereographic projection from $ -c $ and $ M_s $ \tup($ s \in (0,1]
	$\tup) be the family of M\"obius transformations defined by
	\begin{equation}\label{E:Moebius}
		M_s \colon \Ss^{n+1} \to \Ss^{n+1}, \quad p \mapsto
		\sig_c^{-1}(s\sig_c(p)).
	\end{equation}
	Then the radius of $ M_s(C) $ decreases strictly (and converges to 0) as $
	s $ decreases to 0 if and only if the center of $ C $ lies in the closed
	hemisphere determined by $ c $.
\end{lem}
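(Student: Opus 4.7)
The plan is to reduce to a 2-dimensional explicit computation via the symmetries of $M_s$.  First, the identity $\abs{\sigma_c(p)} = \tan\bigl(d(c,p)/2\bigr)$ combined with $\sigma_c(M_s(p)) = s\,\sigma_c(p)$ gives
\begin{equation*}
\tan\tfrac{d(c,M_s(p))}{2} = s\tan\tfrac{d(c,p)}{2}\qquad (p\neq -c),
\end{equation*}
and $M_s$ is equivariant under $\SO_{n+1}$-rotations fixing $c$.  Applying a suitable rotation, I may assume $c = e_{n+2}$, that the spherical center $z$ of $C$ has the form $\cos\alpha\,c + \sin\alpha\,e_1$ with $\alpha = d(c,z)\in[0,\pi]$, and that $C$ lies in the totally geodesic $\Ss^2 = \Ss^{n+1}\cap\operatorname{span}(e_1,e_2,c)$.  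The hypothesis on the location of the center of $C$ then becomes $\alpha \leq \pi/2$.

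Since $M_s$ fixes $\pm c$ and preserves every great circle through them, it preserves the great circle $\gamma$ through $c$ and $z$.  Parametrize $\gamma$ by signed angle $\theta \in (-\pi,\pi]$ from $c$; then the two points of $C\cap\gamma$ sit at $\theta = \alpha\pm r_C$ (reduced mod $2\pi$ if $\alpha+r_C > \pi$) and are antipodal on $C$.  By the displayed identity, $M_s$ sends them to $\mu_s(\alpha\pm r_C)$, where $\mu_s(\theta) := 2\arctan(s\tan(\theta/2))$.  Setting $A := \tan((\alpha+r_C)/2)$, $B := \tan((\alpha-r_C)/2)$ (using the $\pi$-periodicity of $\tan$ when $\alpha+r_C > \pi$) and $\rho(s) := \arctan(sA) - \arctan(sB)$, the two arcs of $\gamma$ between the images have half-lengths $\abs{\rho(s)}$ and $\pi-\abs{\rho(s)}$, so the spherical radius of $M_s(C)$ equals $r(s) = \min\bigl(\abs{\rho(s)},\,\pi-\abs{\rho(s)}\bigr)$.

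A direct differentiation gives
\begin{equation*}
\rho'(s) = \frac{(A-B)(1-s^2AB)}{(1+s^2A^2)(1+s^2B^2)},
\end{equation*}
and the product-to-sum identity yields $AB = (\cos r_C - \cos\alpha)/(\cos r_C + \cos\alpha)$, so $\abs{AB}\leq 1$ iff $\cos\alpha\geq 0$ iff $\alpha\leq\pi/2$.  When $\alpha \leq \pi/2$, one checks $A > B$ and $0 < \rho(s) < \pi/2$ on $(0,1]$, so $r(s) = \rho(s)$ is strictly increasing in $s$ with $r(0^+) = 0$, proving the ``if'' direction.  When $\alpha > \pi/2$, $\abs{AB} > 1$; the quantity $\abs{\rho(s)}$ crosses $\pi/2$ at $s^* = 1/\sqrt{\abs{AB}}\in(0,1)$, so $r(s)$ first grows from $r_C$ up to $\pi/2$ and then decays to $0$ as $s$ decreases, violating strict monotonicity.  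The main obstacle is the bookkeeping when $\alpha+r_C > \pi$---where the ``far'' axial point of $C$ lies past $-c$ on $\gamma$, $\rho(s)$ is negative, and the short arc of $M_s(C)$ along $\gamma$ passes through $-c$ for $s$ near $1$ but through $c$ for small $s$---and one must confirm that the formula $r = \min(\abs{\rho},\,\pi-\abs{\rho})$ consistently picks out the shorter arc through the regime change at $s^*$.
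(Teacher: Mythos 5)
Your approach is essentially the same as the paper's: after normalizing $c = e_{n+2}$, pass to the great circle $\gamma$ through $c$ and the center of $C$, track the two diametrically opposite points of $C$ on $\gamma$, and reduce the monotonicity question to the sign of a derivative. The paper works in the stereographic coordinate (with the conformal factor $4(1+|x|^2)^{-2}$), splitting into the three cases $\infty\in I$, $0\in I$, and $I=[a,b]$ with $ab>0$; you work intrinsically with the angle $2\arctan(s\tan(\theta/2))$, which is the same computation. Your derivative $\rho'(s)=(A-B)(1-s^2AB)/[(1+s^2A^2)(1+s^2B^2)]$ and the identity $AB=(\cos r_C-\cos\alpha)/(\cos r_C+\cos\alpha)$ are correct and exactly match the paper's criterion $ab\le 1$.

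Two points deserve attention. First, your description of the $\alpha>\pi/2$ case is partly wrong: you assert that $|\rho(s)|$ crosses $\pi/2$ at $s^*=1/\sqrt{|AB|}$. That is true only when $\alpha+r_C>\pi$, i.e.~when $AB<-1$, in which case $1-s^2AB$ never vanishes, $\rho$ is monotone negative, and $|\rho|$ does cross $\pi/2$ exactly at $s^*$. But when $\pi/2<\alpha$ and $\alpha+r_C\le\pi$ one has $AB>1$; then $s^*$ is a genuine critical point of $\rho$ and $\rho(s^*)=2\arctan\sqrt{A/B}-\pi/2<\pi/2$, so no crossing occurs and $r(s)=\rho(s)$ throughout. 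The non-monotonicity conclusion still holds (from the sign change of $\rho'$ at $s^*$), but not for the reason you give, and the phrase ``grows from $r_C$ up to $\pi/2$'' is false in this subcase. Second, you explicitly flag but do not carry out the bookkeeping for $\alpha+r_C>\pi$; that check does go through (your formula $r=\min(|\rho|,\pi-|\rho|)$ correctly picks the shorter arc on both sides of $s^*$), but as written this remains a gap in your argument, whereas the paper handles it directly in its $\infty\in I$ case with a short local argument near $s=1$.
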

\begin{proof}
	No generality is lost in assuming that $ c = e_{n+2} $. Recall that
	M\"obius transformations map circles to circles. The pull-back of
	the round metric under $ \sig^{-1} = \sig_{c}^{-1} $ is given by
	\begin{equation*}%\label{E:}
	\qquad	g_x = 4\big(1+\abs{x}^{2}\big)^{-2}\,\bar g_x\qquad (x \in
		\R^{n+1}),
	\end{equation*}
	where $ \bar g $ is the euclidean metric of $ \R^{n+1} $.  Let $
	[p,q] $ be a diameter of $ C $ whose extension contains $ e_{n+2} $ and,
	without loss of generality, let $ I \subs \hat{\R} = \hat{\R} e_1 $ be
	its image under $ \sig $. 
	\begin{enumerate}
		\item [$\ast$] If $ \infty \in I $, then $ C $ is contained in
			the open southern hemisphere.  For $ s \in (0,1] $ close to 1, the
			length of $ [p,q] = [p,-e_{n+2}] \cup [-e_{n+2},q] $ increases
			strictly as $ s $ decreases.
		\item [$ \ast $]  If $ 0 \in I $, then $ C $ is contained in the open
			northern hemisphere, and $ I = [a,b] $ with $ ab \leq 0 $.  The
			length of $ [sa,sb] $ with respect to $ g $ decreases strictly with
			$ s \in (0,1] $ since it is the sum of those of $ [sa,0] $ and $
			[0,sb] $.
		\item [$ \ast $] If neither $ \infty $ nor $ 0 $ lie in $ I $, then $ I
			= [a,b] $ with $ ab > 0 $. A simple computation shows that the
			length of $ [sa,sb] $ decreases strictly together with $ s $ if and
			only if $ ab \leq 1 $.  As reflection in the equator of $ \Ss^{n+1}
			$ corresponds under stereographic projection to inversion of $
			\hat\R^{n+1} $ in $ \Ss^n $, the latter condition is equivalent to
			the midpoint of $ [p,q] $ (i.e., the center of $ C $) lying inside
			the closed northern hemisphere.
	\end{enumerate}
	The conclusion follows since $ M_s([p,q]) $ is a diameter
	of $ M_s(C) $. 
\end{proof}

\begin{crl}\label{C:Moebius}
	Let $ N^n $ be closed, $ f \colon N^n \to \Ss^{n+1} $ be a locally convex
	immersion and $ c $ be the center of $ f(N) $. For $ M_s $ as in
	\eqref{E:Moebius}, let $ \mu(s) $ be the smallest principal curvature of $
	M_s \circ f $.  Then $ \mu(s) $ increases strictly to $ +\infty $ as $ s \in
	(0,1] $ decreases to 0.  
\end{crl}
\begin{proof}
	Let $ p \in N $ be arbitrary and $ u \in TN_p $. Let $ \eta $ be the
	corresponding normal section, i.e., the curve which is the intersection of 
	$ f(V) $ with the unique totally geodesic 2-sphere $ \Sig $ tangent to $ u $
	and $ \nu(p) $,  where $ V $ is a small neighborhood of $ p $. Finally, let
	$ C \subs \Sig $ be the unique osculating circle to $ \eta $ at $ f(p) $.
	Then $ M_s(C) $ is the osculating circle to the normal
	section of $ M_s \circ f $ determined by $ u $. By \lref{L:convex} and
	\rref{R:convex}, the center of $ C $ lies inside the hemisphere determined
	by $ c $. Hence the radius of $ M_s(C) $ decreases strictly to 0 with $ s \in
	(0,1] $ by \lref{L:Moebius}. This implies the conclusion of the lemma.
\end{proof}

\begin{prp}\label{P:whe}
	Let $ J \subs J' \subs (0,\pi) $ be nondegenerate intervals with $ J' \cap
	(J' + \frac{\pi}{2}) =\emptyset $.  Then the inclusion
	\begin{equation}\label{E:JJ'}
		i\colon \sr F(\Ss^{n+1};\cot J) \inc \sr F(\Ss^{n+1};\cot J')
	\end{equation}
	is a weak homotopy equivalence.
\end{prp}
\begin{proof}
	The proof will be broken into several steps.

	\ssk \noindent \tit{Step 1:} If $ J = (a,b) $ and $ J'=[a,b) $ or $ (a,b]
	\subs (0,\pi) $ (with $ b-a \leq \frac{\pi}{2}) $, then the inclusion
	\begin{equation*}%\label{E:}
		\sr F\big(\Ss^{n+1};\cot J \big) \inc \sr F\big(\Ss^{n+1};\cot
		J' \big) 
	\end{equation*}
	is a weak homotopy equivalence. \ssk

	By compactness of $ \D^k $,  given any map of pairs
	\begin{equation*}%\label{E:}
		\qquad F \colon \big(\D^k,\Ss^{k-1}\big) \to 
		\big(\sr F\big(\Ss^{n+1};\cot (a,b]\big)\,,\,\sr
		F\big(\Ss^{n+1};\cot(a,b)\big)\big)\qquad (k \in \N^+), 
	\end{equation*}
	there exists $ \eps > 0 $ such that the image of $ F $ is actually contained
	in $ \sr F\big(\Ss^{n+1};\cot (a+\eps,b]\big) $.  Consequently, by
	\lref{L:fr}\,(b), $ H \colon (s,p) \mapsto F(p)_{s\eps} $ defines a homotopy
	of pairs between $ F=H_0 $ and a map $ H_1 $ with image in 
	\begin{equation*}%\label{E:}
		\sr F\big(\Ss^{n+1};\cot (a,b-\eps]\big) \subs 
		\sr F\big(\Ss^{n+1};\cot (a,b)\big).
	\end{equation*}
	This establishes the triviality of the relevant relative homotopy groups.
	The proof for $ [a,b) $ is analogous.

	\ssk \noindent \tit{Step 2:} It can be assumed that $ J'$ is open. \ssk

	Indeed, suppose that we have proved the proposition in this special case.
	Let $ J \subs J' $ be as in the hypothesis, but with $ J' $ not
	necessarily open.
	
	If $ \length(J') < \frac{\pi}{2} $, then we can find an open interval
	$ J'' $ with $ J \subs J' \subs J'' $. Under our assumption the latter
	inclusion, as well as the inclusion $ J \subs J'' $, yield w.h.e.~of the
	corresponding spaces, hence so does $ J \subs J' $. 
	
	If $ \length(J') = \frac{\pi}{2} $, then $ J' $ cannot be closed, since $ J'
	\cap (J'+\frac{\pi}{2}) = \emptyset $. If it is open, there is nothing to
	prove. Assume then that $ J' $ is half-open, and let $ I,\,I' $ denote the
	interiors of $ J,\,J' $.  Then in the following commutative diagram of
	inclusions:
	\begin{equation*}%\label{E:}
	\begin{tikzcd}
		\sr F(\Ss^{n+1};\cot J) \arrow[hookrightarrow]{r}{} & \sr
		F(\Ss^{n+1};\cot J') \\ 
		\sr F(\Ss^{n+1};\cot I) \arrow[hookrightarrow]{r}{}
		\arrow[hookrightarrow]{u}{} &  \sr F(\Ss^{n+1};\cot I')
		\arrow[hookrightarrow]{u}{}
			%& \SO_{n+2}\times_{\SO_{n+1}} \Diff_+(\Ss^n)
			%\arrow{lu}{\Psi} \arrow[swap]{u}{\Psi} \arrow[swap]{ru}{\Psi} & 
	\end{tikzcd}
	\end{equation*}
	the bottom arrow is a w.h.e.~as $ I' $ is open, and the arrow on the
	right is a w.h.e.~by step 1. If the length of $ J $ is also $ \frac{\pi}{2}
	$, then either $ I=J $ or step 1 applies to the left vertical arrow; in
	either case, it is a w.h.e.. Finally, if the length of $ J $ is less than $
	\frac{\pi}{2} $, then the left arrow is a w.h.e.~by the preceding paragraph
	(with $ I $ in place of $ J $ and $ J $ in place of $ J' $). Consequently,
	the top inclusion is also a w.h.e..

	\ssk \noindent \tit{Step 3:} It can be assumed that $ J' \subs
	(0,\frac{\pi}{2}) $. \ssk

	By step 2, no generality is lost in assuming that $ J' $ is open, say, $ J'
	= (a,b) $.  Let $ \rho = \frac{a+b}{2} - \frac{\pi}{4} $.  By
	\lref{L:fr}\,(b), $ f \mapsto f_\rho $ yields a homeomorphism of pairs
	\begin{equation*}%\label{E:}
		\big(  \sr F\big(\Ss^{n+1};\cot J'\big)\,,\, \sr F\big(\Ss^{n+1};\cot
		J\big) \big) \to 
		\big(  \sr F\big(\Ss^{n+1};\cot (J'-\rho)\big)\,,\, \sr
		F\big(\Ss^{n+1};\cot (J-\rho)\big) \big).
	\end{equation*}
	But $ J'-\rho \subs (0,\frac{\pi}{2}) $	by the hypothesis on $ J' $ and the
	choice of $ \rho $. 

	\ssk \noindent \tit{Step 4:} It can be assumed that $ J' = (0,\frac{\pi}{2})
	$. \ssk

	For, suppose that $ J \subs  J' \subs (0,\frac{\pi}{2}) $ as in step 3.
	To prove that \eqref{E:JJ'} is a w.h.e.~, it is sufficient to prove that the
	inclusions $ J \subs (0,\frac{\pi}{2}) $ and $ J' \subs (0,\frac{\pi}{2}) $
	induce w.h.e.~of the corresponding spaces, that is, it can be assumed that
	the interval defining the target is $ (0,\frac{\pi}{2}) $, as claimed.
	
	\ssk \noindent \tit{Step 5:} The proposition holds in case $ J \subs
	J'=(0,\frac{\pi}{2}) $ is not closed. \ssk

	By step 1, if $ J $ is half-open then it can be replaced with its interior;
	in other words, it can be assumed that $ J = (a,b) \subs \big(
	0,\frac{\pi}{2} \big) = J' $.  Let $ J_t = (a-ta,b-ta) $ and define
	\begin{equation*}%\label{E:}
		h_t \colon \sr F(\Ss^{n+1};\cot J) \to \sr F(\Ss^{n+1};\cot J_t),\quad
		f\mapsto f_t \qquad (t \in [0,a]).
	\end{equation*}
	By \lref{L:fr}, each $ h_t $ is a homeomorphism. Let
	\begin{equation*}%\label{E:}
		i_{J_t} \colon \sr F(\Ss^{n+1};\cot J_t) \inc \sr
		F(\Ss^{n+1};(0,+\infty)).
	\end{equation*}
	Then $ i_{J_0} = i_{J_0} \circ h_0 \iso i_{J_t} \circ h_t $.	Therefore,
	to show that $ i_J = i_{J_0} $ is a weak homotopy equivalence, it is
	sufficient to prove that $ i_{J_1} $ is one. For this, let 
	\begin{equation*}%\label{E:}
		\qquad F \colon \big(\D^k,\Ss^{k-1}\big) \to \big(\sr
		F\big(\Ss^{n+1};(0,+\infty)\big)\,,\,\sr
		F\big(\Ss^{n+1};\cot (0,b-a)\big)\big) \qquad (k \in \N^+)
	\end{equation*}
	be continuous and set $ f^z = F(z) $ ($ z \in \D^k $). Let $ c(z) $ denote
	the center of $ f^z(\Ss^n) $. Define a homotopy
	\begin{equation*}%\label{E:}
		H \colon (0,1] \times \D^k \to \sr F\big(\Ss^{n+1};(0,+\infty)\big)\quad
		\text{by}\quad H(s,z) = M_s^z \circ f^z,
	\end{equation*}
	where $ M_s^z $ is given by \eqref{E:Moebius} with $ c=c(z) $.  By
	\cref{C:Moebius}, the restriction of $ H $ to $ [\eps,1] \times \D^k $ is a
	homotopy of pairs, and for $ \eps > 0 $ sufficiently small, $ H_\eps(\D^k)
	\subs \sr F\big(\Ss^{n+1};\cot(0,b-a)\big) $. This establishes that $
	i_{J_1} $ is a weak homotopy equivalence.

	\ssk \noindent \tit{Step 6:} The proposition holds in case $ J \subs J' =
	\big( 0,\frac{\pi}{2} \big) $ is closed. \ssk

	Let $ J = [a,b] \subs (0,\frac{\pi}{2}) $. By step 5, it suffices to show
	that 
	\begin{equation*}%\label{E:}
		\sr F\big(\Ss^{n+1};\cot [a,b)\big) \inc 
		\sr F\big(\Ss^{n+1};\cot [a,b]\big)
	\end{equation*}
	is a weak homotopy equivalence. Let
	\begin{equation*}%\label{E:}
		\qquad F \colon \big(\D^k,\Ss^{k-1}\big) \to 
		\big(\sr F\big(\Ss^{n+1};\cot [a,b]\big)\,,\,
		\sr F\big(\Ss^{n+1};\cot [a,b)\big)\big)\qquad (k \in \N^+)
	\end{equation*}
	be continuous. Fix $ s \in (0,1] $ for now and let $ f^z = F(z) $ and $
	M^z_s $ be as above. Recalling the construction of normal translates in
	\eqref{E:fr}, define
	\begin{equation}\label{E:H}
		H_s \colon [0,a) \times \D^k \to \sr F\big(\Ss^{n+1};(0,+\infty)\big)
		\quad \text{by} \quad H_s(t,z) = (M^z_s \circ f^z_{t})_{-t}.
	\end{equation}
	We would like to extend this definition to $ [0,a] \times \D^k $; the
	problem is that, by \lref{L:fr}\,(a), $ f^z_a $ is not immersive at
	points where some principal radius equals $ a$. Intuitively, at such a point
	the corresponding principal curvature is $ +\infty $. However, this can be
	circumvented. 

	By \eqref{E:nufr}, the Gauss map of $ f^z_t $ is given by
	\begin{equation}\label{E:eq1}
		\nu_{f^z_t} = \cos t \, \nu_{f^z} - \sin t \, f^z \qquad (t
		\in [0,a),~z \in \D^k).
	\end{equation}
	Further, since M\"obius transformations are conformal, the Gauss map of $
	M_s^z \circ f^z_t $ is given by
	\begin{equation}\label{E:eq2}
		\nu_{M_s^z \circ f^z_t}(p) =
		\frac{d(M_s^z)_{f^z_t(p)}(\nu_{f^z_t}(p))}{\abs{\text{numerator}}}
		\qquad (t \in [0,a),~z \in \D^k,~p \in \Ss^n).
	\end{equation}
	Even though strictly speaking $ f^z_a $ need not have a Gauss map, the
	expression on the right side of \eqref{E:eq1} is still sensible (and 
	smooth) for $ t = a $. Hence, so is that on the right side of
	\eqref{E:eq2}; note that the denominator does not vanish since $ M_s^z $ is
	a diffeomorphism.  It is thus possible to extend \eqref{E:H} to all of $
	[0,a] \times \D^k $ by comparing \eqref{E:fr} and setting
	\begin{equation*}%\label{E:}
		H_s(t,z) =  \cos t\, (M_s^z \circ f^z_t) - \sin t\,(\nu_{M^z_s\circ
		f^z_t})  \qquad (t \in [0,a],~z \in \D^k).
	\end{equation*}
	Now $ M_1^z = \id_{\Ss^{n+1}} $, hence $ H_1(t,z) = f^z $
	for all $ (t,z) \in [0,a] \times \D^k $.  Therefore, by compactness of
	the latter, there exists $ \eps>0 $ such that $ H_s(t,z) $ is 
	an immersion for all $ (s,t,z) \in [1-\eps,1] \times [0,a] \times
	\D^k $.

	Let $ s \in [1-\eps,1] $ and $ z \in \D^k $ be arbitrary. Since the
	principal radii of $ f_t^z $ take on values in $ [a-t,b-t] \subs (0,b-t] $ for
	all $ t \in [0,a) $ by hypothesis, those of $ H_s(t,z) $ lie in
	$ (t,b] $ by \cref{C:Moebius}. Hence, those of $ H_s(a,z) $ take on values
	in $ [a,b] $ by continuity. We claim that they actually take values in $
	[a,b) $ for $ s < 1 $. 

	Given a curve $ \ga $ on $ \Ss^2 $, define $ \ga_a $ by $ \ga_a(\tau)
	= \cos a\, \ga(\tau) + \sin a\, \nu_{\ga}(\tau) $, where $ \nu_\ga $
	is its normal unit vector. Note that if $ \ga $ parametrizes a
	circle of radius $ r > a $, then $ \ga_a $ parametrizes a circle of
	radius $ r - a $. Let $ C $ be the osculating circle to the normal
	section of $ f^z $ determined by an arbitrary tangent vector $ u \in T\Ss^n $.
	Then $ \te{C} = \big(M_s(C)\big)_{-a} $ is the osculating circle to
	the normal section of $ H_s(a,z) $ determined by $ u $. If the
	radius of $ C $ is greater than $ a $ and $ s < 1 $, then the radius of $
	\te{C} $ is smaller than that of $ C $ by \lref{L:Moebius}, proving the
	claim.

	We conclude that $ G\colon (s,z) \mapsto H_s(a,z) $ defines a homotopy of
	pairs connecting $ G_1 = F $ and a map $ G_{1-\eps} $ with $
	G_{1-\eps}(\D^k) \subs \sr F\big( \Ss^{n+1};\cot [a,b) \big) $. This
	completes the proof of step 6.

	The conclusion now follows from the combination of steps 4--6.
\end{proof}

\begin{prp}[homotopical rigidity of locally convex immersions]\label{P:positive}
	Let $ r \in (0,\frac{\pi}{2}) $ and $ \io_r \colon \Ss^n \to \Ss^{n+1},~p
	\mapsto \sin r\, p - \cos r\, e_{n+2} $. Then
	\begin{equation*}%\label{E:}
		\Psi \colon \SO_{n+2} \times_{\SO_{n+1}} \Diff_+(\Ss^n) \to \sr
		F(\Ss^{n+1};(0,+\infty)),\quad [Q,g] \mapsto Q \circ \iota_r \circ g
	\end{equation*}
	embeds its domain as a deformation retract. 
	
	Given $ f \in \sr F(\Ss^{n+1};(0,+\infty)) $, let $ c_f \in \Ss^{n+1} $
	denote the center of $ f(\Ss^n) $. Choose $ Q_f \in \SO_{n+2} $ with $
	Q_f(-e_{n+2}) = c_f $ and let $ g_f \colon \Ss^n \to \Ss^n $ be the Gauss
	map of $ \pi \circ Q_f^{-1} \circ f $, where $ \pi $ denotes central
	projection of the southern hemisphere onto $ \E^{n+1} \equiv \R^{n+1} \times
	\se{-1} $. Then a homotopy inverse of $ \Psi $ is:
	\begin{equation*}%\label{E:}
		\Phi \colon \sr F(\Ss^{n+1};(0,+\infty)) \to \SO_{n+2}
		\times_{\SO_{n+1}} \Diff_+(\Ss^n),\quad f \mapsto [Q_f,g_f].
	\end{equation*}
\end{prp}
\begin{proof}
	The proof consists of five steps.

	\ssk \noindent \tit{Step 1:} $ \Psi $ is well-defined and continuous. \ssk

By \lref{L:fr}\,(b) applied to the canonical inclusion $ \iota \colon \Ss^n
\inc \Ss^{n+1} $, $ \iota_r $ has principal radii everywhere equal to $
r $. Thus $ \iota_r $ lies in $ \sr F(\Ss^{n+1};
(0,+\infty)) $, and so does $ Q \circ \iota_r \circ g $ for any $ Q \in
\SO_{n+2} $ and $ g \in \Diff_+(\Ss^n) $.  Moreover, $ \iota_r $ commutes with
elements of $ \SO_{n+1} $, hence $ \Psi $ is well-defined; it is clear that it
is continuous. Note also that $ \Psi $ is, up to homotopy, independent of the
choice of $ r $.

\ssk \noindent \tit{Step 2:} $ \Phi $ is well-defined and continuous. \ssk 

Although $ c_f $ is uniquely determined, $ Q_f $, and hence
also $ \bar f = \pi \circ Q_{f}^{-1} \circ f $ and $ g_f = \nu_{\bar f} $, are
not. Nevertheless, any other choice $ Q_f' \in \SO_{n+2} $ is related to $ Q_f $ by $
Q'_f = Q_f P $ for some $ P \in \SO_{n+1} $. As $ \pi $ commutes with
elements of $ \SO_{n+1} $, the corresponding $ \bar f' $ is given by 
\begin{equation*}%\label{E:}
	{\bar{f}}' = \pi \circ P^{-1} \circ Q_f^{-1} \circ f  = P^{-1} \bar f.
\end{equation*}
Therefore, the corresponding Gauss maps are related by 
\begin{equation*}%\label{E:}
	g_f' = \nu_{\bar f'} = P^{-1} \circ \nu_{\bar f} = P^{-1} g_f,
\end{equation*} 
so that $ [Q_f,g_f] = [Q'_f,g'_f] $, that is, $ \Phi $
is well-defined. It is also continuous because $ c_f $ depends continuously on
$ f $ and the bundle projection $ \SO_{n+2} \to \SO_{n+2}/\SO_{n+1} \home
\Ss^{n+1} $ admits local cross-sections.

\ssk \noindent \tit{Step 3:} Suppose that the image of $ h \colon \Ss^n \to
\Ss^{n+1} $ is contained in the southern hemisphere. Then $ h \in \sr
F(\Ss^{n+1};(0,+\infty)) $ if and only if $ \pi \circ h \in \sr
F(\E^{n+1};(-\infty,0)) $. In particular,
\begin{equation}\label{E:assertions}
	\bar f = \pi \circ Q_f^{-1} \circ f \in \sr F(\E^{n+1};(-\infty,0))
	\text{\ and \ } g_f \in \Diff_+(\Ss^n) \  \text{if} \  f \in
	\sr F(\Ss^{n+1};(0,+\infty)). 
\end{equation}

It is clear that $ h $ is an immersion if and only if $ \pi \circ h $ is. By
\cite[prop.~2.3]{CarWar}, the sectional curvatures of $ h $ are everywhere
greater than 1 if and only if those of $ \pi \circ h $ are everywhere greater
than 0.  Equivalently through Gauss' equation, the principal curvatures of $ h
$ have the same sign everywhere if and only if the same holds for $ \pi \circ h
$.  Because $ \Ss^n $ is connected, it suffices to compute the sign at a single
point. This will be done by comparison with an appropriate hypersphere. Let $
\rho \in (0,\frac{\pi}{2}) $  be the radius of the smallest closed metric ball
centered at $ -e_{n+2} $
containing the image of $ h $. Then $ h $ and $ \io_\rho $ are tangent at any
point $ q \in \Ss^{n+1} $ where their images intersect.  Further, these images
lie on the same side of the great hypersphere tangent to
them at $ q $, namely, the one which contains $ -e_{n+2} $. Thus, if the
principal curvatures of $ h $ are positive as those of $ \io_\rho $, then the
Gauss maps of $ h $ and $ \iota_\rho $ coincide at (the corresponding preimages
of) $ q $. Hence so do those of $ \pi \circ h $ and $ \pi \circ \iota_\rho $ at
$ \pi(q) $.  Moreover, $ \pi \circ h (\Ss^n) $ is entirely contained in the
closed ball bounded by $ \pi \circ \iota_\rho (\Ss^n) $.  Consequently their
principal curvatures have the same sign at $ \pi(q) $.  But $ \pi \circ
\iota_\rho \colon p \mapsto \tan \rho\,p \ (p \in \Ss^n)$, and it is readily
seen that the principal curvatures of the latter are negative, according to our
convention \dref{D:Gauss}.  Finally, the Gauss map of an immersion into $
\E^{n+1} $ having negative principal curvatures is an orientation-preserving
diffeomorphism (see \cite[lem.~4.2]{Zuehlke2}).  

\ssk \noindent \tit{Step 4:} $ \Phi \circ \Psi = \id $. \ssk 

Let $ Q \in \SO_{n+2} $ and $ g \in \Diff_+(\Ss^n) $ be arbitrary. The center
$ c_f $ of the image of $ f = \Psi([Q,g]) = Q \circ \iota_r \circ g $ is $
Q(-e_{n+2}) $. Choose $ Q_f = Q $ itself.  Then 
\begin{equation*}%\label{E:}
	\bar f = \pi \circ Q_f^{-1} \circ f = \pi \circ \iota_r \circ g = \tan r\,j \circ
	g, 
\end{equation*}
where $ j \colon \Ss^n \inc \E^{n+1} $ is the canonical inclusion. As $ g
$ preserves orientation, 
\begin{equation*}%\label{E:}
	g_f = \nu_{\bar f} = \nu_{j \circ g} = \nu_{j} \circ g = \id_{\Ss^n}
	\circ \,g = g.
\end{equation*}
Therefore  $ \Phi \circ \Psi ([Q,g]) = [Q,g] $.

\ssk \noindent \tit{Step 5:} $ \Psi \circ \Phi \iso \id $. \ssk

Let $ f \in \sr F(\Ss^{n+1};(0,+\infty)) $ be arbitrary and let $ j_r \colon
\Ss^n \to \E^{n+1} $, $ p \mapsto \tan r\, p $. Then
\begin{equation*}%\label{E:}
	\Psi \circ \Phi(f) = Q_f \circ \iota_r \circ g_f = (Q_f \circ \pi^{-1} )
	\circ \pi \circ \iota_r \circ g_f = (Q_f \circ \pi^{-1}) \circ j_r \circ
	\nu_{\bar f} .
\end{equation*}
Furthermore, we can write
\begin{equation*}%\label{E:}
	f = (Q_f \circ \pi^{-1}) \circ \pi \circ Q_f^{-1} \circ f = (Q_f \circ
	\pi^{-1}) \circ \bar f.
\end{equation*}
Thus, by step 3, it suffices to construct a homotopy 
\begin{equation*}%\label{E:}
	H \colon [0,1] \times \sr F(\E^{n+1};(-\infty,0)) \to \sr
	F(\E^{n+1};(-\infty,0))
\end{equation*}
such that $ H_0(\phi) = \phi $ and $ H_1(\phi) = j_r \circ \nu_{\phi} $ for any
$ \phi \colon \sr F(\E^{n+1};(-\infty,0)) $. It is shown in 
\cite[prop.~4.3]{Zuehlke2} that the most natural homotopy $ H\colon (s,\phi)
\mapsto (1-s) \phi + s (j_r \circ \nu_\phi) $ works. 

It is clear that $ \Psi $ is an embedding onto a closed subspace. Moreover, for
$ r = \frac{\pi}{4} $, any element of the form $ j_r \circ g $ (where $ g \in
\Diff_+(\Ss^n) $) is stationary under $ H $. It follows that steps 4 and 5 yield 
a deformation retraction of $ \sr F(\Ss^{n+1};(0,+\infty)) $ onto the image of $
\Psi $. 
\end{proof}

\begin{proof}[Proof of \tref{T:homotopical}]
	It can be assumed that $ J $ is nondegenerate, for otherwise the assertion
	is almost trivial.  By \lref{L:fr}\,(b), for $ \rho \in (0,\pi) $ such that
	$ \rho < J $, $ f \mapsto f_\rho $ defines a homeomorphism 
	\begin{equation*}%\label{E:}
		h \colon \sr F(\Ss^{n+1};\cot J) \to \sr F(\Ss^{n+1}; \cot (J - \rho)).
	\end{equation*} 
	Furthermore, $ h $ is compatible with $ \Psi $ in the sense that if we
	denote by $ \Psi_r $ the map given by \eqref{E:Psir}, then $ h \circ \Psi_r
	= \Psi_{r-\rho} $. Indeed, for any $ Q \in
	\SO_{n+2} $ and $ g \in \Diff_+(\Ss^n) $,
	\begin{equation*}%\label{E:}
		h \circ \Psi_r([Q,g]) = h(Q \circ \iota_r \circ g) = Q \circ
		(\iota_r)_{\rho} \circ g = Q \circ \iota_{r-\rho} \circ g =
		\Psi_{r-\rho}([Q,g]). 	
	\end{equation*}
	Thus $ \Psi_r $ is a w.h.e.~with $ \sr F(\Ss^{n+1};\cot
	J) $ if and only if $ \Psi_{r-\rho} $ is a w.h.e.~with $
	\sr F(\Ss^{n+1}; \cot (J - \rho)) $. Choosing $ \rho $ appropriately, one
	obtains a reduction to the case where $ J \subs \big( 0,\frac{\pi}{2} \big)
	$.  Then in the following commutative triangle:
	\begin{equation*}%\label{E:}
		\begin{tikzcd}
			\SO_{n+2}\times_{\SO_{n+1}} \Diff_+(\Ss^n) \arrow[swap]{d}{\Psi_r} 
			\arrow{rd}{\Psi_r} & \\
		    \sr F(\Ss^{n+1};\cot J) \arrow[hookrightarrow]{r}{} &
			\sr F(\Ss^{n+1};(0,+\infty))  
		\end{tikzcd}
	\end{equation*}
	the diagonal arrow is a homotopy equivalence by \pref{P:positive}, while
	\pref{P:whe} guarantees that the horizontal arrow is a weak homotopy
	equivalence. Therefore, so is the vertical arrow.
\end{proof}

% ##########################################################################
% ##########################################################################
% ########################   Hemispherical   ###########################
% ##########################################################################
% ##########################################################################
\section{Hypersurfaces whose duals have image contained in a
hemisphere}\label{S:hemispherical}

The purpose of this section is to provide a proof of \tref{T:homotopical2}.

\begin{dfn}\label{D:zeta}
	For each $ s \in [0,1] $, let
	\begin{equation*}%\label{E:}
		\zeta_s \colon \Ss^{n+1} \ssm \se{\pm e_{n+2}} \to \Ss^{n+1} \ssm
		\se{\pm e_{n+2}},\quad q \mapsto 
	\frac{q-s\gen{q,e_{n+2}}e_{n+2}}{\abs{q-s\gen{q,e_{n+2}}e_{n+2}}}.
	\end{equation*}
	Notice that $ \zeta_0 $ is the identity map. It will be convenient to
	regard $ \zeta_1 $ as a map into $ \Ss^n $.
\end{dfn}

\begin{lem}\label{L:zeta}
	Let $ q \in \Ss^{n+1} $ and $ u,\,v \in T\Ss^{n+1}_q $ be mutually
	orthogonal unit vectors. Suppose that $ \an(v,-e_{n+2}) \leq r <
	\frac{\pi}{2} $. Then $ \abs{d(\zeta_s)_q(u)} \geq \cos r > 0 $ for any $ s
	\in [0,1] $.
\end{lem}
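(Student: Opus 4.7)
The plan is to compute $d(\zeta_s)_q(u)$ explicitly, translate the angle hypothesis into a simple quadratic inequality on certain inner products, and then verify the asserted lower bound by a direct two-point check on a concave quadratic in $s(2-s)$.

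First, I would observe that $\zeta_s$ factors through $\R^{n+2}$ as $\zeta_s = \pi \circ F_s$, where
\begin{equation*}
F_s \colon \R^{n+2} \to \R^{n+2},\quad x \mapsto x - s\gen{x,e_{n+2}}e_{n+2}
\end{equation*}
is linear and $\pi \colon \R^{n+2}\ssm\se{0} \to \Ss^{n+1}$ is radial projection.  Using the standard formula $d\pi_y(w) = \abs{y}^{-1}\bigl(w - \bigl\langle w,\tfrac{y}{\abs{y}}\bigr\rangle \tfrac{y}{\abs{y}}\bigr)$ together with the linearity of $F_s$, I would derive a closed expression for $d(\zeta_s)_q(u)$.

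Second, I would abbreviate $a = \gen{q,e_{n+2}}$, $\alpha = \gen{u,e_{n+2}}$, $\beta = \gen{v,e_{n+2}}$ and $t = s(2-s) \in [0,1]$.  Writing $q = q' + ae_{n+2}$ and $u = u' + \alpha e_{n+2}$ with $q',u' \perp e_{n+2}$, and invoking $\gen{u,q}=0$ and $\abs{u}=\abs{q}=1$, a short direct computation of $\abs{F_s(u)}^2$, $\abs{F_s(q)}^2$ and $\gen{F_s(u),F_s(q)}$ should yield
\begin{equation*}
\abs{d(\zeta_s)_q(u)}^2 \;=\; \frac{1 - t(a^2+\alpha^2)}{(1 - t a^2)^2}.
\end{equation*}

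Third, I would translate the hypothesis on $v$ into a bound on $a^2+\alpha^2$.  The condition $\an(v,-e_{n+2}) \leq r$ says $\beta^2 \geq \cos^2 r$.  Since $q,u,v$ are pairwise orthogonal unit vectors in $\R^{n+2}$, Bessel's inequality applied to $e_{n+2}$ gives $a^2+\alpha^2+\beta^2 \leq 1$, whence
\begin{equation*}
a^2+\alpha^2 \;\leq\; 1 - \cos^2 r \;=\; \sin^2 r.
\end{equation*}

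Finally, combining the two previous steps, the desired inequality $\abs{d(\zeta_s)_q(u)}^2 \geq \cos^2 r$ reduces to showing
\begin{equation*}
p(t) \;:=\; 1 - t(a^2+\alpha^2) - \cos^2 r\,(1 - ta^2)^2 \;\geq\; 0 \quad\text{for all } t \in [0,1].
\end{equation*}
The polynomial $p$ is concave in $t$ (its leading coefficient is $-a^4\cos^2 r \leq 0$, and the case $a=0$ is affine), so its minimum on $[0,1]$ is attained at an endpoint.  At $t=0$ one has $p(0) = \sin^2 r \geq 0$; at $t=1$, using $a^2+\alpha^2 \leq \sin^2 r$ and $(1-a^2)^2 \leq 1$, one gets $p(1) \geq \cos^2 r - \cos^2 r = 0$.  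Hence $p \geq 0$ on $[0,1]$, which yields the claim.  The only place even mildly delicate is deriving the clean closed form for $\abs{d(\zeta_s)_q(u)}^2$; the remaining inequality is a routine endpoint check on a concave quadratic.
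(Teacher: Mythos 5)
Your proposal is correct and reaches the same closed-form expression for $\abs{d(\zeta_s)_q(u)}^2$ as the paper, which is the crux of the lemma; the auxiliary routing through the linear map $F_s$ and radial projection $\pi$ is a minor cosmetic alternative to the direct differentiation the paper performs. The two proofs diverge only in the final inequality step, and the difference is worth noting: the paper observes that for $t = 2s-s^2 \in [0,1]$ the numerator $1 - t(a^2+\alpha^2)$ is minimized at $t=1$ while the denominator $(1-ta^2)^2$ is maximized at $t=0$ (it equals $1$ there and stays in $[0,1]$ since $ta^2 \leq 1$), giving the uniform bound $\abs{d(\zeta_s)_q(u)}^2 \geq 1 - a^2 - \alpha^2$ with no reference to $r$; the angle hypothesis and Bessel's inequality are then applied only at the very end via $1-a^2-\alpha^2 \geq \beta^2 \geq \cos^2 r$. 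Your version instead folds $\cos^2 r$ into the polynomial $p(t)$ from the outset and argues by concavity plus an endpoint check, which is sound but slightly more computation than needed: you must verify $p(1)\geq 0$ by invoking both $(1-a^2)^2\leq 1$ and $a^2+\alpha^2\leq\sin^2 r$, whereas the paper's monotone bound on numerator and denominator gives those for free. In short, same substance, nearly the same computations; the paper's decoupling of the $t$-dependence from the $r$-dependence is a touch cleaner and produces an intermediate inequality that is informative in its own right.
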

\begin{proof}
	Notice first of all that $ q $ must be distinct from $ \pm e_{n+2} $ (i.e.,
	$ q $ belongs to the domain of $ \zeta_s $), since it is
	orthogonal to $ v$ by hypothesis. A straightforward computation yields that:
	\begin{equation}\label{E:zetas}
		d(\zeta_s)_q(u) = \frac{u-s\gen{u,e_{n+2}}e_{n+2}}
		{\abs{q-s\gen{q,e_{n+2}}e_{n+2}}}
		%{\big[1 - (2s-s^2)\gen{q,e_{n+2}}^2\big]^\frac{1}{2}} +
		+ \frac{(2s-s^2)\gen{q,e_{n+2}}\gen{u,e_{n+2}}} 
		{\abs{q-s\gen{q,e_{n+2}}e_{n+2}}^3}
		%{\big[1-(2s-s^2)\gen{q,e_{n+2}}^2\big]^{\frac{3}{2}}}
		\big( q-s\gen{q,e_{n+2}}e_{n+2}\big).
	\end{equation}
	After some labor, one obtains that:
	\begin{equation*}%\label{E:}
		\abs{d(\zeta_s)_q(u)}^2 = \frac{ 1-(2s-s^2)\big( \gen{q,e_{n+2}}^2 +
		\gen{u,e_{n+2}}^{2} \big)} 
		{ \big[ 1 - (2s - s^2)\gen{q,e_{n+2} }^2  \big]^2 }
	\end{equation*}
	Now for $ s \in [0,1] $, $ (2s - s^2) $ also lies in $ [0,1] $, hence
	\begin{equation*}%\label{E:}
		\abs{d(\zeta_s)_q(u)}^2 \geq 1-\gen{q,e_{n+2}}^2 -
		\gen{u,e_{n+2}}^2 \geq \gen{v,e_{n+2}}^2 \geq \cos^2 r,
	\end{equation*}
	where the third inequality holds by hypothesis, while the second one follows
	from the fact that $ e_{n+2} $ has norm $ 1 $ and $ q,\,u,\,v $ are
	orthogonal unit vectors.
\end{proof}

\begin{lem}\label{L:zeta2}
	Let $ N^n $ be a manifold and $ h \colon N^n \to \Ss^{n+1} $ be an immersion
	which induces a complete Riemannian metric and whose Gauss map
	\tup(dual\tup) has image contained in the metric ball of radius $ r <
	\frac{\pi}{2} $ about $ -e_{n+2} $.  Define $ h_s = \zeta_s \circ h $ \tup($
	s \in [0,1] $\tup).  Then:
	\begin{enumerate}
		\item [(a)] $ h_1 $ is an orientation-preserving diffeomorphism between
			$ N^n $ and $ \Ss^n $.\footnote{With the conventions adopted in
			(\ref{D:Gauss}).}
		\item [(b)] Each $ h_s $, in particular $ h = h_0 $, is an embedding.
		\item [(c)] The image of the Gauss map of $ h_s $ is contained in the
			open hemisphere determined by $ -e_{n+2} $ for each $ s $.
	\end{enumerate}
\end{lem}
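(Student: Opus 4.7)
The plan is to organize the proof around the derivative estimate from \lref{L:zeta}. Applying that lemma with $ v=\nu_h(p) $ (which by hypothesis lies within angle $ r<\tfrac{\pi}{2} $ from $ -e_{n+2} $) shows that $ \abs{d(h_s)_p(w)}\ge\cos r\,\abs{dh_p(w)} $ for every $ w\in T_pN $ and every $ s\in[0,1] $. Hence each $ h_s $ is an immersion, and its induced metric dominates $ \cos^2 r $ times the metric induced by $ h $; since the latter is complete by hypothesis, the induced metric of $ h_s $ is complete too.

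For (c), I would compute $ \nu_{h_s}(p) $ explicitly. Set $ q=h(p) $ and write $ q=\cos\theta\,e_{n+2}+\sin\theta\,u $ with $ u\in\Ss^n $ and $ \theta\in(0,\pi) $; the hypothesis forces $ q\neq\pm e_{n+2} $, since $ \nu_h(p) \perp q $ but is not perpendicular to $ e_{n+2} $. Decompose $ T_q\Ss^{n+1} $ into the meridian line spanned by $ -\sin\theta\,e_{n+2}+\cos\theta\,u $ and the $ n $-plane $ V^\perp=\se{e_{n+2},q}^\perp $, which is also contained in $ T_{\zeta_s(q)}\Ss^{n+1} $. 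A direct calculation using \eqref{E:zetas} shows that $ d\zeta_s $ preserves $ V^\perp $ (scaling by $ 1/\beta $) and sends the meridian direction at $ q $ to the meridian direction at $ \zeta_s(q) $ (scaled by $ (1-s)/\beta^2 $), where $ \beta^2=(1-s)^2\cos^2\theta+\sin^2\theta $. From these two scaling factors one solves for the unit vector orthogonal to $ \zeta_s(q) $ and to $ d\zeta_s(T_qh(N)) $, with sign chosen so as to reduce to $ \nu_h(p) $ at $ s=0 $. The resulting expression makes $ \gen{\nu_{h_s}(p),-e_{n+2}} $ manifestly positive for all $ s\in[0,1] $, proving (c); a byproduct is that at $ s=1 $ the formula reduces to $ \nu_{h_1}(p)=-e_{n+2} $ identically.

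For (a), $ h_1\colon N^n\to\Ss^n $ is a local diffeomorphism between equidimensional manifolds whose pullback of the round metric is complete, so $ h_1 $ is a Riemannian covering map onto $ \Ss^n $; since $ n\ge 2 $ and $ \Ss^n $ is simply connected, it is in fact a diffeomorphism. The orientation assertion is then forced by $ \nu_{h_1}\equiv-e_{n+2} $ together with the conventions in \dref{D:Gauss}. For (b), I observe that for each $ s\in[0,1) $, $ \zeta_s $ restricts to a diffeomorphism of $ \Ss^{n+1}\ssm\se{\pm e_{n+2}} $ onto itself (on each meridian great circle through $ e_{n+2} $ it acts as a smooth bijection of the open arc between the poles), so it suffices to prove that $ h=h_0 $ is an embedding. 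Writing $ h(p)=\cos\theta(p)\,e_{n+2}+\sin\theta(p)\,h_1(p) $ with $ \theta(p)\in(0,\pi) $ smooth, the map $ h\circ h_1^{-1}\colon\Ss^n\to\Ss^{n+1}\ssm\se{\pm e_{n+2}} $ is the graph of the smooth function $ \theta\circ h_1^{-1}\colon\Ss^n\to(0,\pi) $ in the meridian parametrization, hence an embedding; composing with the diffeomorphism $ h_1 $ finishes (b).

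The principal obstacle is careful sign bookkeeping in the computation of $ \nu_{h_s} $; done correctly, both the positivity in (c) and the specific value $ \nu_{h_1}\equiv-e_{n+2} $ needed for the orientation in (a) emerge from the same closed-form formula.
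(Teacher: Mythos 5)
Your proposal is correct, and the overall skeleton mirrors the paper's: the derivative estimate from \lref{L:zeta} (applied with $q=h(p)$, $v=\nu_h(p)$) does the heavy lifting for showing each $h_s$ is an immersion and that the metric induced by $h_1$ is complete, and the covering-space argument for part (a) is identical to the paper's. Where you differ is in parts (b) and (c). For (b), the paper shows injectivity of each $h_s$ directly by taking $\gen{h_s(p),e_{n+2}}=\gen{h_s(q),e_{n+2}}$ and deducing $\gen{h(p),e_{n+2}}=\gen{h(q),e_{n+2}}$, hence $h(p)=h(q)$, then invokes (a); you instead note that $\zeta_s$ is a diffeomorphism of $\Ss^{n+1}\ssm\se{\pm e_{n+2}}$ onto itself for $s<1$, which reduces everything to the $s=0$ case, and you settle that by the graph description $h\circ h_1^{-1}(\xi)=\cos(\theta\circ h_1^{-1}(\xi))\,e_{n+2}+\sin(\theta\circ h_1^{-1}(\xi))\,\xi$. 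That reduction is both cleaner and more conceptual; it makes visible that all of (b) follows from (a) once one knows $\zeta_s$ is a diffeo off the poles. For (c), the paper's determinant chain $\det(\dots,-e_{n+2},h_s(p))\sim\cdots>0$ is tidier in that it never has to solve for $\nu_{h_s}$ and gets the orientation statement at $s=1$ for free; your approach instead computes $\nu_{h_s}$ in closed form by decomposing $T_q\Ss^{n+1}$ into the meridian line and $V^\perp=\se{e_{n+2},q}^\perp$. I verified the two scaling factors you quote ($1/\beta$ on $V^\perp$ and $(1-s)/\beta^2$ on the meridian, with $\beta^2=(1-s)^2\cos^2\theta+\sin^2\theta$); writing $\nu_h(p)=a\,m_q+w$ with $a>0$ forced by the hypothesis on the Gauss map, one gets $\nu_{h_s}(p)$ proportional to $\tfrac{a\beta}{1-s}m_{\zeta_s(q)}+w$, whose $-e_{n+2}$-component is indeed positive and tends to $1$ as $s\to 1$. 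So the sign bookkeeping you flag as the main hazard does close up. The one thing to make explicit in a final write-up is that your derivation of (a)'s orientation claim from $\nu_{h_1}\equiv -e_{n+2}$ relies on the continuity of $s\mapsto\nu_{h_s}(p)$ together with the a priori fact $\nu_{h_1}(p)\in\se{\pm e_{n+2}}$ (since $d(h_1)_p$ surjects onto $T\Ss^n$); that is a valid step, but it is doing real work, whereas the paper extracts the orientation directly from the determinant inequality at $s=1$.
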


\begin{urmk}%\label{R:}
	The assertion that $ h_1 $ is a diffeomorphism under these hypotheses
	constitutes \cite[thm.~1.2]{WanXia}. The proof presented here is the same as
	that given by Wang/Xia.  
\end{urmk}

\begin{proof}[Proof of \lref{L:zeta2}]
	Applying \lref{L:zeta} with $ q = h(p) $ and $ v = \nu_h(p) $,
	one concludes immediately from the chain rule that 
	\begin{equation*}\label{E:dh}
		\qquad \abs{d(h_s)_p(u)} \geq \cos r \abs{dh_p(u)}\qquad \text{for all $ p \in
		N,~u \in TN_p $, $ s \in [0,1] $}. 
		\qquad 
	\end{equation*}
	Therefore $ h_s $ is an immersion. In
	particular, $ h_1 \colon N^n \to \Ss^n $ is a local diffeomorphism. Since
	$ N $ is complete with respect to the Riemannian metric $ g $ induced by $ h
	$, it is also complete with respect to the homothetic metric $ \cos^2 r\?g
	$. But when $ N $ is furnished with the latter, $ h_1 $ does not decrease
	distances, hence a standard argument (see \cite[prop.~I.3.28]{BriHae}) shows
	that $ h_1 $ is a covering map. % or \cite[ch.~7, lem.~3.3]{Carmo}. 
	As $ \Ss^n $ is simply-connected, this in turn implies that $ h_1 $ is a
	diffeomorphism. 
	
	Suppose that $ h_s(p) = h_s(q) $ for some $ s \in [0,1) $ and $
	p,\,q \in N $. Taking the inner product of both sides with $ e_{n+2} $,
	a straightforward computation shows that $ \gen{h(p),e_{n+2}} =
	\gen{h(q),e_{n+2}}$, and this, together with $ h_s(p) = h_s(q) $, in turn
	implies that $ h(p) = h(q) $. But then clearly $ h_1(p) = h_1(q) $, so that
	$ p = q $ by the preceding paragraph.  Thus each $ h_s $ is an embedding by
	compactness of $ N = \Ss^n $.
	
	Finally, let $ s \in [0,1] $, $ p \in N $ be arbitrary and $ (u_1,\dots,u_n)
	$ be a positively oriented frame in $ TN_p $. Let $ a\sim b $ indicate
	that $ a $ is a positive multiple of $ b $. Then:
	\begin{alignat*}{9}%\label{E:}
		& \det \big( d(h_s)_p(u_1)\,,\,\dots\,,\,d(h_s)_p(u_n)\,,\, -e_{n+2} 
		\,,\, h_s(p)\big) & & \\
		\sim & \det \big(
		d(h_s)_p(u_1)\,,\,\dots\,,\,d(h_s)_p(u_n)\,,\, -e_{n+2} \,,\, h(p)\big) 
		& &\quad \text{(by the definition of $ h_s $)}\\
		\sim & \det \big( dh_p(u_1)\,,\,\dots\,,\,dh_p(u_n)\,,\,
		-e_{n+2} \,,\,h(p)\big) & &\quad \text{(by \eqref{E:zetas}, $ q =
		h(p),\,u=dh_p(u_k) $)}\\ 
		\geq & \cos r \det \big(
		dh_p(u_1)\,,\,\dots\,,\,dh_p(u_n) \,,\,\nu_h(p) \,,\,h(p)\big)  > 0 
		& &\quad \text{(since $ \an (\nu_h(p),-e_{n+2}) \leq r < \tfrac{\pi}{2}$)}
	\end{alignat*}
	It follows that $ \gen{\nu_{h_s}(p),-e_{n+2}} > 0 $, establishing (c).
	Moreover, as $ h_1 $ has image contained in $ \Ss^n \subs \R^{n+1} $, by
	setting $ s = 1 $ above, one deduces that $ h_1 $ is orientation-preserving.
\end{proof}

\begin{dfn}\label{D:H}
	Let $ \sr H $ denote the set of all immersions (or, equivalently by
	\lref{L:zeta2}\,(b), embeddings) of $ \Ss^n $ into $ \Ss^{n+1} $ whose Gauss
	maps have image contained in some open hemisphere depending upon the
	immersion, furnished with the $ C^{\infty} $-topology. 
	
	Note that by \cite[thm.~1.2]{WanXia} (or \lref{L:zeta2}\,(a)), $ \sr H $
	coincides with the space of all complete hypersurfaces of $ \Ss^{n+1} $
	whose Gauss maps have images contained in some strictly convex ball.
\end{dfn}

\begin{proof}[Proof of \tref{T:homotopical2}]
	If $ P \in \SO_{n+1} $, then $ P \circ \iota =
	\iota \circ P $.  This implies that $ \bar\Psi $ is well-defined.
	Similarly, $ \bar\Phi $ is well-defined even though $ Q_f $ and $ g_f $ are
	not, because $ \tau = \zeta_1 $ commutes with elements of $ \SO_{n+1} $, and
	any two choices of $ Q_f $ differ by such an element. Furthermore, $
	\bar\Phi $ is continuous because $ c_f $ depends continuously on $ f $ and
	the bundle projection $ \SO_{n+2} \to \SO_{n+2}/\SO_{n+1} \home \Ss^{n+1} $
	admits local cross-sections. Further, $ g_f $ is indeed an
	orientation-preserving diffeomorphism of $ \Ss^n $ by \lref{L:zeta2}\?(a)
	applied to $ h = Q_f^{-1} \circ f $. 

	Let $ g \in \Diff_+(\Ss^n),~Q\in \SO_{n+2} $ be arbitrary. The Gauss map of
	$ \iota \circ g \colon \Ss^n \to \Ss^{n+1} $ is constant and equal to $
	-e_{n+2} $, as one verifies directly from the definition. Hence if 
	\begin{equation*}%\label{E:}
		f = Q \circ \iota \circ g = \bar\Psi([Q,g]),
	\end{equation*} 
	then we may choose $ Q_f = Q $, so that $ g_f  = \tau \circ \iota \circ g =
	g $.  Therefore $ \bar\Phi \circ \bar\Psi $ is the identity map.

	We claim that
	\begin{equation*}%\label{E:}
		(s,f) \mapsto f_s = Q_f \circ \zeta_s \circ Q_f^{-1} \circ
		f\qquad (s \in [0,1],~f \in \sr H)
	\end{equation*}
	defines a homotopy connecting $ \id_{\sr H} $ and $ \bar\Psi \circ \bar\Phi $.
	It is clear that $ f_0 = f $ and $ f_1 = \bar\Psi \circ \bar\Phi(f) $ for
	any $ f \in \sr H $. Moreover, taking $ h = Q_f^{-1} \circ f $, it follows
	from \lref{L:zeta2} that each $ f_s $ is an immersion (actually, an
	embedding) whose Gauss map has image contained in the open hemisphere
	determined by $ c_f $. Thus $ f_s \in \sr H $ for each $ s $, and the proof
	is complete.
\end{proof}

% ##########################################################################
% ##########################################################################
% ########################   Spherical   ###########################
% ##########################################################################
% ##########################################################################
\section{Hypersurfaces of spherical space forms}\label{S:spherical}

The purpose of this section is to provide a proof of \tref{T:spherical} and to
justify the interpretation  of $ \sr F(M;I) $ claimed in \dref{D:F}. Some
related open problems are listed at the end.

\begin{proof}[Proof of \tref{T:spherical}]
	Let $ \pr_N \colon \te{N} \to N $ be the universal cover of $ N $ and $
	\te{f} $ be any lift of $ f $, so that 
	\begin{equation}\label{E:diagram}
	\begin{tikzcd}
		\te{N} \rar[dashed]{\te{f}} \dar[swap]{\pr_N} & \Ss^{n+1}
		\dar[]{\pr_M} \\ N^n \rar[swap]{f} & M^{n+1}
	\end{tikzcd}
	\end{equation}
	commutes.  Since $ \pr_M $ is a local isometry by hypothesis, the principal
	curvatures of $ \te{f} $ and $ f $ at corresponding points coincide;
	moreover, because $ N $ is complete, so is $ \te{N} $ (with respect to the
	metric induced by $ \te{f} $). Hence $ \te{f} $ satisfies the hypotheses of
	\tref{T:rigidity}, so that $ \te{N} $ must be diffeomorphic to $ \Ss^n $.

	Suppose that $ J(f) = J(\te{f}) $ does not contain $ 0 \pmod \pi $. Choose $
	q \in f(N) $ such that $ |{f^{-1}(q)}| = m $ is as large as possible.  Then,
	by commutativity of \eqref{E:diagram},
	\begin{equation*}%\label{E:}
		m \abs{\pi_1(N)} = \abs{\pr_N^{-1}f^{-1}(q)} =
		\big\vert {\te{f}^{-1}\pr_M^{-1}(q)}\big\vert \leq \abs{\pr_M^{-1}(q)} =
		\abs{\pi_1(M)},
	\end{equation*}
	where the inequality comes from the fact that $ \te{f} $ is 
	injective, as guaranteed by \tref{T:rigidity}\,(b). %This proves (a).
	
	\hypertarget{paragraph}{We} claim that the dual of $ \te{f} $ is a lift of the dual of $ f $.
	To see this, express $ N $ as the quotient of its universal cover $ \te{N} $
	by a free proper action of a group $ H < \Iso_+(\te{N})  $. Similarly,
	express $ M $ as a quotient of $ \Ss^{n+1} $ by some $ G < \SO_{n+2} $. By
	connectedness of $ N $ and commutativity of \eqref{E:diagram}, for each $ h
	\in H $ there exists $ g \in G $ such that $ \te{f} \circ h = g \circ \te{f}
	$. Therefore
	\begin{equation*}%\label{E:}
		\nu_{\te{f}} \circ h = \nu_{\te{f} \circ h} = \nu_{g \circ \te{f}} = g
		\circ \nu_{\te{f}}\,,
	\end{equation*}
	where in the last equality the fact that $ g \in \SO_{n+2} $ was used.
	As $ \pr_M $ is a local isometry, $ \exp_M \circ\, d(\pr_M) = \pr_M
	\circ \exp_{\Ss^{n+1}} $.  Consequently $ \te{f}^\star := (\te{f})^{\star} $
	factors through $ N $, and its quotient is exactly $ f^{\star}$. In other
	words, 
	\begin{equation*}%\label{E:}
	\begin{tikzcd}
		\te{N} \rar{{\te{f}^{\star}}} \dar[swap]{\pr_N} & \Ss^{n+1}
		\dar[]{\pr_M} \\ N^n \rar[swap]{f^{\star}} & M^{n+1}
	\end{tikzcd}
	\end{equation*}
	commutes, as claimed. 
	
	Suppose now that $ J(f) $ does not contain $ \frac{\pi}{2} \pmod \pi $. 
	By \cref{C:dual}\,(b) (applied in the case where the ambient is $ \Ss^{n+1} $), 
	\begin{equation*}%\label{E:}
		J(f^\star) = J(\te{f}^{\star}) \equiv  \tfrac{\pi}{2} \pm J(\te{f}) 
		\equiv \tfrac{\pi}{2} \pm J(f) \pmod \pi.
	\end{equation*}
	Moreover, $ N $ is complete with respect to the metric induced by $ f^\star $
	because $ \te{N} $ is complete with respect to the metric induced by $
	\te{f}^\star $ (by \rref{R:lastparagraph} applied to $ \te{f} \colon \te{N}
	\to \Ss^{n+1} $). Thus $ f^{\star} $ satisfies the hypotheses of (a), and
	this implies (b).

	Suppose now that $ f $ is an embedding. Then $ \pr_M^{-1}(f(N)) $ is an
	embedded submanifold of $ \Ss^{n+1} $. Let $ C $ be any of its connected
	components and $ G_C = \set{g \in G}{g(C) = C} $. The set inclusion $ C \inc
	\Ss^{n+1} $ has the same principal curvatures as $ f $, hence $ C $ must be
	diffeomorphic to $ \Ss^n $ by \tref{T:rigidity}. As $ f(N) $ is
	the quotient of $ C $ by $ G_C $, 
	\begin{equation*}%\label{E:}
		k\abs{\pi_1(N)} = k\abs{\pi_{1}(f(N))} = k\abs{G_C} = \abs{G} =
		\abs{\pi_1(M)}. \qedhere
	\end{equation*}
\end{proof}

\begin{rmk}[irreducibility of hypersurfaces and $ \sr F(M;I) $]\label{R:meta}
	Let $ N^n $ and $ \bar N^n $ be arbitrary smooth manifolds and $ M^{n+1} $
	an arbitrary Riemannian manifold. Let us call a hypersurface $ \bar f
	\colon \bar N \to M $ a \tdef{factor} of $ f \colon N \to M $ if there
	exists a covering map $ \pr \colon N \to \bar N $ such that $ f = \bar f
	\circ \pr $.  Our viewpoint here is that $ f $ and $ \bar f $ are
	essentially the same object. Define $ f $ to be \tdef{irreducible} if it has
	no factors arising from nontrivial covering maps. 
	
	Now suppose that $ N $ is closed, and furnish it
	with an arbitrary metric space structure compatible with its topology. If $
	f \colon N^n \to M^{n+1} $ is a hypersurface, then there exists $ \eps > 0 $
	such that the restriction of $ f $ to any ball of radius $ \eps $ is
	injective. Set 
	\begin{equation*}%\label{E:}
		G_f = \set{ \ga \in \Diff(N) }{ f \circ \ga = f }.
	\end{equation*} 
	The displacement function $ p \mapsto d(p,\ga(p)) $ of an element $ \ga $ of
	$ G_f $ cannot take on any positive value less than $ \eps $ by the choice
	of $ \eps $, hence $ G_f $ acts freely on $ N $. Moreover, if $ {N} $ can be
	covered by $ m $ balls of radius $ \eps $, then the order of $ G_f $ is
	bounded above by $ m $.  We conclude that any hypersurface $ f \colon {N}
	\to M $ has a unique irreducible factor, namely, that induced by $ f $ on $
	{N}/G_f $.

	Returning to our usual context, let $ M^{n+1} $ be a spherical space form
	and $ J \subs \R \pmod \pi $ be an interval of length $ < \frac{\pi}{2} $.
	Then the preceding observation together with \tref{T:spherical} yield a
	bijective correspondence between $ \sr F(M;\cot J) $ and the set of all
	complete hypersurfaces of $ M $ with principal curvatures in $ \cot J $,
	modulo the equivalence relation that identifies two hypersurfaces if they
	have a common factor.  The correspondence simply assigns to each element $ f
	\colon \Ss^n \to M $ of $ \sr F(M;\cot J) $ (the equivalence class of) its
	unique irreducible factor.
\end{rmk}

\begin{rmk}[stronger version of \tref{T:homotopicalspherical}]\label{R:stronger}
	Let $ M^{n+1} $ be a spherical space form and $ \Ga $ its fundamental group.
	Then $ \Ga $ acts on $ \Ss^{n+1} $ via deck transformations, which are
	isometries.  Moreover, when regarded in this way as a subgroup of $
	\SO_{n+2} $, $ \Ga $ also acts naturally on $ \SO_{n+2} \times_{\SO_{n+1}}
	\Diff_+(\Ss^n) $ through $ \ga [Q,g] = [\ga\? Q,g] $. With respect to these
	actions, the w.h.e.~ $ \Psi $ in \tref{T:homotopical} becomes $ \Ga
	$-equivariant. Therefore, under the same hypotheses on $ J $ as in that
	theorem,  $ \Psi $ induces a map 
	\begin{equation*}
		\bar\Psi \colon	(\Ga \backslash \SO_{n+2})  \times_{\SO_{n+1}}
		\Diff_+(\Ss^n) \to \sr F(M;\cot J),\quad [\Ga Q,g] \mapsto \pr \circ\, Q
		\circ \iota_r \circ g
	\end{equation*}
	making the following square commutative, where the vertical arrows are
	covering maps:
	\begin{equation*}%\label{E:}
		\begin{tikzcd}
			\phantom{\big( \Ga \backslash \big)}\SO_{n+2} \times_{\SO_{n+1}}
			\Diff_+(\Ss^n) \rar{\Psi} \dar & \sr F(\Ss^{n+1};\cot J)
			\dar{\pr_\ast} \\ \big( \Ga\backslash \SO_{n+2} \big)
			\times_{\SO_{n+1}} \Diff_+(\Ss^n) \arrow[swap]{r}{\bar\Psi} & \sr
			F(M;\cot J)
		\end{tikzcd}
	\end{equation*}
	Looking at the short exact sequences relating the fundamental groups and
	applying the five-lemma, one deduces that $ \bar \Psi $ must also be a
	w.h.e.. 
\end{rmk}

%\subsection*{Related questions} In conclusion, we list the following open
%problems.
\begin{qtn}
	Let $ J \subs (0,\pi) $ be an interval of length greater than $
	\frac{\pi}{2} $. Suppose that the closed manifold $ N^n $ can be immersed in
	$ \Ss^{n+1} $. Can it be immersed with principal curvatures in $ \cot (J) $?
	More generally, is the inclusion of the space of immersions $
	N^n \to \Ss^{n+1} $ with principal curvatures in $ \cot (J) $ into the space
	of all immersions a homotopy equivalence?
\end{qtn}

\begin{qtn}\label{Q:plane}
	Let $ J \subs \R \pmod \pi $ be an interval of length $ < \frac{\pi}{2} $
	containing  $ 0 \pmod \pi $. What is the homotopy type of $ \sr
	F(\Ss^{n+1};\cot J) $? (Compare \eref{E:plane} and \rref{R:know}.) 
	
	In particular, is the existence of a $ p $-dimensional distribution over $
	\Ss^n $ sufficient to guarantee the existence of a hypersurface in $ \sr
	F(\Ss^{n+1};\cot J) $ with exactly $ p $ positive principal curvatures at
	every point?  
\end{qtn}

\begin{qtn}
	Let $ n > 2 $ be odd. Then in the situation of \tref{T:spherical}, the
	theorem guarantees that $ N^n $ has $ \Ss^n $ for its universal cover.  Must
	$ N $ be diffeomorphic to a spherical space form? Compare \cite{Petrie}.  
\end{qtn}

\subsection*{Acknowledgements}
The author thanks the Mathematics Department of the University of Bras\'ilia
(Brazil) for hosting him as a post-doctoral fellow, and E.~Longa and J.~Ripoll
for clarifications concerning \cite{LonRip1}.  Financial support by
\ltsc{capes} is gratefully acknowledged.

\vfill\eject

\bibliographystyle{amsplain}
\bibliography{references}

\vspace{12pt} \noindent{\small 
%\ttt{zuehlkep@gmail.com} \\
\tsc{Departamento de Matem\'atica, Universidade de Bras\'ilia (\ltsc{unb}) \\
Campus Darcy Ribeiro, 70910-900 -- Bras\'ilia, DF, Brazil}} \\
\vspace{-10pt}
\newcommand*{\emailimg}[1]{%
	\raisebox{12pt}{%
    \includegraphics[
		height=12pt,
		%width=0.3\textwidth,
      keepaspectratio,
    ]{#1}%
  }%
}

\noindent \raisebox{15pt}{\tit{E-mail address:}} \emailimg{email}

\end{document}